\renewcommand{\baselinestretch}{\baselinestretch}
\renewcommand{\baselinestretch}{1.1}
\numberwithin{equation}{section}
\newtheorem{thm}{Theorem}[section]
\newtheorem{lem}[thm]{Lemma}
\newtheorem{cor}[thm]{Corollary}
\newtheorem{prop}[thm]{Proposition}
\theoremstyle{definition}
\newtheorem{rmk}[thm]{Remark}
\theoremstyle{remark}
\numberwithin{equation}{section}
\newcommand{\ra}{{\,\rightarrow\,}}
\newcommand{\nra}{{\,\nrightarrow\,}}
\newcommand{\gen}{\text{gen}}
\newcommand{\ord}{\textnormal{ord}}
\newcommand{\z}{{\mathbb Z}}
\newcommand{\q}{{\mathbb Q}}
\newcommand{\n}{{\mathbb N}}
\newcommand{\sqf}{{\mathrm{sf}}}
\newcommand{\mca}{\mathcal{A}}
\newcommand{\mcc}{\mathcal{C}}
\newcommand{\mcd}{\mathcal{D}}
\newcommand{\0}{\bm 0}
\newcommand{\legendre}[2]{ \left(\frac{#1}{#2}\right)}
\newcommand{\Mod}[1]{\ (\mathrm{mod}\ #1)}
\title{Composition laws of binary quadratic forms and isolations of quadratic forms}
\author[Jangwon Ju et al.] {Jangwon Ju, Daejun Kim, Kyoungmin Kim, Mingyu Kim, and Byeong-Kweon Oh}
\address{Department of Mathematics Education, Korea National University of Education, Cheongju 28173, Korea}
\email{jangwonju@knue.ac.kr}
\thanks{This work of the first author was supported by the National Research Foundation of Korea(NRF) grant funded by the Korea government(MSIT) (NRF-2022R1A2C1092314).}
\address{Department of Mathematics Education, Korea University, Seoul 02841, Republic of Korea}
\email{daejunkim@korea.ac.kr}
\thanks{This work of the second author was supported by the National Research Foundation of Korea(NRF) grant funded by the Korea government(MSIT) (RS-2024-00342122 and RS-2024-00455692).}
\address{Department of Mathematics, Hannam University, Daejeon 34430, Korea}
\email{kiny30@hnu.kr}
\address{Department of Mathematics Education, Pusan National University, Busan 46241, Korea}
\email{mingyukim@pusan.ac.kr}
\thanks{This work of the third and the fourth author was supported by the National Research Foundation of Korea(NRF) grant funded by the Korea government(MSIT) (NRF-2021R1C1C2010133).}
\address{Department of Mathematical Sciences and Research Institute of Mathematics, Seoul National University, Seoul 08826, Korea}
\email{bkoh@snu.ac.kr}
\thanks{This work of the fifth author was supported by the National Research Foundation of Korea(NRF) grant funded by the Korea government(MSIT) (NRF-2020R1A5A1016126 and RS-2024-00342122).}
\subjclass[2020]{Primary 11E12, 11E16, 11E20}
\keywords{Composition laws of binary quadratic forms, Isolations of quadratic forms}
\begin{document}

\begin{abstract} A positive definite and integral quadratic form $f$ is called irrecoverable if there is a quadratic form $F$ such that it  represents all proper subforms of $f$, whereas it does not represent $f$ itself. In this case, $F$ is called an isolation of $f$.  In this article, we prove that there does not exist a binary isolation of any unary quadratic form. We also prove that there does not exist a ternary isolation of any binary quadratic form. Furthermore, if the form class group of a primitive binary quadratic form has no element of order $4$, then the discriminant of any quaternary isolation of it, if exists, is a square of an integer. The composition laws of primitive binary quadratic forms play an essential role in the proofs of the results.
\end{abstract}
\maketitle

\section{Introduction} A quadratic form $f$ of rank $n$, for a positive integer $n$, is a homogeneous quadratic polynomial
$$
f(x_1,x_2,\dots,x_n)=\sum_{i,j=1}^n f_{ij} x_ix_j, \quad  (f_{ij}=f_{ji} \in \q),
$$ 
where the symmetric matrix $M_f=(f_{ij})$ which is called the Gram matrix of $f$ is non-degenerate. The {\it scale ideal} $\mathfrak{s}f$ and the {\it norm ideal} $\mathfrak{n}f$ of $f$ are defined to be the $\z$-module generated by $\{f_{ij}\mid 1\le i,j\le n\}$ and $\{f_{ii}\mid 1\le i\le n\}$, respectively. We say that $f$ is {\it integral} if $\mathfrak{s}f\subseteq\z$.  The determinant of $M_f$ is called the {\it discriminant} of $f$, which is denoted by $df$. In particular, if $f(x,y)=ax^2+bxy+cy^2$ is a binary quadratic form, then we additionally use the classical definition for its discriminant $D_f:=-4df=b^2-4ac$. If $M_f$ is a diagonal matrix, then we simply write $f=\langle f_{11},f_{22},\dots,f_{nn}\rangle$. Throughout this article, we always assume that the Gram matrix $M_f$ is positive definite.   

Let $f$ and $g$ be quadratic forms with ranks $n$ and $m$, respectively. We say $g$ is {\it represented} by $f$, and write $f\ra g$, if there is an $n\times m$ integral matrix  $T=(t_{ij}) \in M_{n\times m}(\z)$ such that 
\begin{equation}\label{eqn:def-rep}
g(y_1,y_2,\dots,y_m)=f(t_{11}y_1+t_{12}y_2+\cdots+t_{1m}y_m,\dots,t_{n1}y_1+\dots+t_{nm}y_m).    
\end{equation}
If we denote the Gram matrices of $f$ and $g$ by $M_f$ and $M_g$, respectively, then this implies that 
\[
    T^tM_fT=M_g.
\]
If the above matrix $T$ can be taken from $\mathrm{GL}_n(\z)$, then we say that $f$ is {\it isometric} to $g$. For each prime $p$, we say that $g$ is represented by $f$ over $\z_p$, if there is a $T \in M_{n\times m}(\z_p)$ satisfying \eqref{eqn:def-rep}, where $\z_p$ is the ring of $p$-adic integers. Similarly, $f$ is isometric to $g$ over $\z_p$ if $T\in\mathrm{GL}_n(\z_p)$. We also say $g$ is {\it locally represented} by $f$ if $g$ is represented by $f$ over $\z_p$ for any prime $p$. The {\it genus} of $f$ is the set of all quadratic forms that are locally isometric to $f$. The {\it class number} of $f$ is the number of quadratic forms in the genus of $f$ that are not isometric over $\z$. It is well known that the class number of $f$ is always finite and is greater than $1$ if the rank of $f$ is greater than $11$ (see \cite[Theorem 1]{Wat} or \cite{LK}).

One of the main problems on the representations of quadratic forms is to find all quadratic forms that are represented by a given quadratic form. If a quadratic form $f$ has class number $1$, then any quadratic form is represented by $f$ if and only if it is locally represented by $f$. Furthermore, there is an effective method to find all quadratic forms that are locally represented by a quadratic form (for this, see \cite{om1}). Indeed, most quadratic forms of rank greater than $1$ have class numbers greater than $1$. If a quadratic form $f$ is of rank greater than or equal to $4$, then there is an effective method to find all unary quadratic forms that are represented by $f$, if the discriminant of $f$ is not too big.  To the authors' knowledge, there is no effective method to find all quadratic forms of rank greater than $1$ that are represented by a quadratic form of class number greater than $1$, except for some very special cases (for example, see \cite{kko}).

Let $f$ be an integral quadratic form. We say that $f$ is {\it irrecoverable} if there is an ``integral" quadratic form $F$ such that it represents all proper subforms of $f$, whereas it does not represent $f$ itself. If such an $F$ exists, then it is called an {\it isolation} of $f$. If there does not exist an isolation, then $f$ is called {\it recoverable}. The notion of (ir)recoverability of a quadratic form was first introduced in \cite{o}. In \cite{klo}, they study the close relationship between this concept and the notion of a minimal universality criterion set for the set of all proper subforms of $f$. In the same article, various properties of (ir)recoverable binary quadratic forms are also investigated. One may easily show that the cubic quadratic form $I_n=\langle1,\ldots, 1\rangle$ of rank $n$ is irrecoverable for any positive integer $n$. For isolations with minimal rank of $I_n$, see \cite{jkkko} and \cite{o}. Recently, it was proved in \cite{co} that any indecomposable quadratic form is irrecoverable.           

For an irrecoverable quadratic form $f$, the minimal rank of isolations of $f$ is denoted by $\text{Iso}(f)$.  One may easily show that the ternary quadratic form $g=2x^2+2y^2+5z^2$ represents all squares of integers greater than $1$ (see \cite{jko}). Since $1$ is not represented by $g$, $g$ is an isolation of the unary quadratic form $I_1=x^2$. Therefore, the ternary quadratic form $2mx^2+2my^2+5mz^2$ is an isolation of $mx^2$, which implies that $\text{Iso}(mx^2) \le 3$ for any positive integer $m$.

In this article, we show that there does not exist a binary isolation of any unary quadratic form (see Theorem \ref{thm:no-binary-iso}).
Moreover, for a positive integer $n$ which is not represented by $f$ with $\mathfrak{n}f=\z$, we show that even though $f$ is not an isolation of $n$, there are infinitely many primes $p$ such that $np^2\ra f$ and classified all such primes $p$ not dividing $D_f$ (see Theorem \ref{thm:np^2-classify}). 

For any irrecoverable binary quadratic form $f$, we show that there is no ternary isolation of $f$, and hence $\text{Iso}(f) \ge 4$ (see Corollary \ref{ternary}). We also show that if a quaternary form $F$ is an isolation of an irrecoverable binary quadratic form $f$ with $\mathfrak{n}f=s\z$, then the squarefree part of $dF$ divides $D_f/s^2$ (see Corollary \ref{cor:quat}). In particular, if $f=kg$, where $k$ is a positive integer and $\mathfrak{n}g=\z$, and the form class group $\mathfrak S_{D_g}$ contains no element of order $4$, then the discriminant of any quaternary isolation of $f$ is a square of an integer (see Corollary \ref{4-square}).

To prove our results, the composition laws of primitive binary quadratic forms play an essential role. For the definition and some properties of composition laws of primitive binary quadratic forms, one may consult  \cite{ca}, \cite{cox}, or \cite{hua}.     

The subsequent discussion will be conducted in the language of quadratic spaces and lattices. By a $\z$-lattice, we mean a free $\z$-module $L=\z e_1 + \cdots +\z e_n$ together with a non-degenerate bilinear form $B$. The associated quadratic map $Q$ on $L$ is defined by $Q(x)=B(x,x)$. For a chosen basis $\{e_i\}$ of $L$, the quadratic form $f_L$ associate to $L$ is defined by 
\[
f_L(x_1,\ldots,x_n)=\sum_{1\le i,j\le n} B(e_i,e_j)x_ix_j.
\]
The Gram matrix, discriminant, scale ideal, norm ideal, and other invariants of $L$ are defined in the same way as for the corresponding quadratic form $f_L$.

We always assume that $\mathfrak{s}L\subseteq \z$. However, in the binary case, we consider binary $\z$-lattices $\ell$ with $\mathfrak{n}\ell\subseteq\z$ and set $D_\ell=-4d\ell$, following the classical convention for the binary quadratic forms. Note that $D_\ell\equiv 0,1\Mod{4}$.

The reader is referred to \cite{ki} and \cite{om} for any unexplained notation and terminology.    

\section{Binary isolations of unary lattices}

In this section, we show that there is no binary isolation of unary lattices, and discuss further related properties.

\subsection{Preliminaries}
Let $D$ be a negative integer such that $D \equiv 0,1 \Mod 4$. Let $\mathfrak S_D$ be the set of proper equivalence classes of positive definite binary quadratic forms $f$ with $\mathfrak{n}f=\z$ and  discriminant $D_f=D$. Then it is well known that $\mathfrak{S}_D$ forms a finite abelian group. The identity class in $\mathfrak{S}_D$ is denoted by $\mathcal{E}$. A class $\mathcal{C} \in \mathfrak{S}_D$ is called {\it an ambiguous class} if $\mathcal C^2=\mathcal{E}$. We define 
$$
\gen(\mathcal E)=\{\mathcal C^2 : \mathcal C \in \mathfrak S_D\},
$$
which is called {\it the principal genus}.  Furthermore, for any $\mathcal A \in \mathfrak S_D$, we define
$$   
\gen(\mathcal A)=\{ \mathcal A\cdot\mathcal C : \mathcal C \in \gen(\mathcal E)\}.
$$

For a class $\mcc$, let $f$ be a quadratic form in $\mcc$. For an integer $n$, we write $n\ra\mcc$ if $n$ is represented by $f$. Moreover, we write $n\ra\mcc_p$ if $n$ is represented by $f$ over $\z_p$. If $n$ is locally represented by $f$, then we write $n\ra \gen(\mcc)$. Since any quadratic form in $\mcc$ represents the same set of integers, the above terminologies are well-defined. We denote the set of all integers that are represented by $\mcc$ as $Q(\mcc)$.

Let $f$ be a binary quadratic form and let $n$ be a positive integer. We say that an integer solution to $f(x,y)=n$ is {\it primitive} if $(x,y)=1$. The following lemma counts the total number of primitive solutions to binary quadratic forms in a form class group.

\begin{lem}\label{lem:total-repnum}
    Let $n$ be a positive integer and let $D$ be a negative integer with  $D\equiv 0,1\Mod{4}$. Assume that $(n,D)=1$. Then the total number of primitive representations of $n$ by class representatives $f_1,\ldots,f_{h(D)}$ of $\mathfrak{S}_D$ is given by
    \begin{equation}\label{eqn:total-repnum}
        \psi(n)=w\sum_{t\mid n}\legendre{D}{t},
    \end{equation}
    where $w=6$ if $D=-3$, $w=4$ if $D=-4$, and $w=2$ otherwise.
\end{lem}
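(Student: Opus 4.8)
The plan is to proceed in two steps that go back to Dirichlet: first reduce $\psi(n)$ to a count of square roots of $D$ modulo $4n$, and then evaluate that count by multiplicativity.

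For the reduction, I would use the classical dictionary between primitive representations and completions of primitive vectors to bases of $\z^2$. Given a class $\mcc_i\in\mathfrak{S}_D$ with representative $f_i$ and a primitive solution $(x_0,y_0)$ of $f_i(x,y)=n$, one completes $(x_0,y_0)^t$ to a matrix in $\mathrm{SL}_2(\z)$; in the new basis $f_i$ becomes $nX^2+bXY+cY^2$ with $b^2-4nc=D$, and $b$ is well defined modulo $2n$ because the ambiguity in the completion is right multiplication by an upper unipotent matrix, which shifts $b$ by a multiple of $2n$. Conversely, each $b\in\z/2n\z$ with $b^2\equiv D\Mod{4n}$ produces the form $\langle n,b,(b^2-D)/(4n)\rangle$, representing a unique class, together with its primitive representation of $n$ at $(1,0)$. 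Chasing fibres one checks that the data $(f_i,(x_0,y_0))$ lying over a fixed $b$ occurs for a single index $i$ and then is unique up to the group of proper automorphisms of $f_i$, which has order $w$. Hence, summing over a complete set $f_1,\dots,f_{h(D)}$ of class representatives,
\[
\psi(n)=w\cdot\#\{\, b\in\z/2n\z : b^2\equiv D\Mod{4n}\,\}.
\]
(Equivalently, under the isomorphism between $\mathfrak{S}_D$ and the proper ideal class group of the order $\mathcal{O}$ of discriminant $D$ in $\q(\sqrt D)$, the quantity $\psi(n)/w$ is the number of integral $\mathcal{O}$-ideals of norm $n$, with $w=|\mathcal{O}^\times|$; here $(n,D)=1$ guarantees these ideals are prime to the conductor, so no subtlety from the non-maximality of $\mathcal{O}$ arises.)

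It then remains to show $N(n):=\#\{b\in\z/2n\z:b^2\equiv D\Mod{4n}\}$ equals $\sum_{t\mid n}\legendre{D}{t}$ when $(n,D)=1$. I would first check that $N$ is multiplicative on integers coprime to $D$, via the Chinese Remainder Theorem applied to the prime factorization of $4n$; the even part is where the hypothesis $(n,D)=1$ is used, since $D\equiv 0,1\Mod 4$ forces $n$ odd when $4\mid D$ and otherwise lets the power of $2$ be split off cleanly. A Hensel-type count then gives $N(p^k)=1+\legendre{D}{p}+\cdots+\legendre{D}{p}^{k}$ for odd primes $p\nmid D$, and a direct computation modulo $2^{k+2}$ gives the same expression with the Kronecker symbol $\legendre{D}{2}$ when $p=2$. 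Multiplying these out yields $N(n)=\prod_{p^k\Vert n}\bigl(1+\legendre{D}{p}+\cdots+\legendre{D}{p}^{k}\bigr)=\sum_{t\mid n}\legendre{D}{t}$, which combined with the displayed identity is exactly \eqref{eqn:total-repnum}. The weights $w=6$ for $D=-3$ and $w=4$ for $D=-4$ require no separate argument: they are simply the orders of the proper automorphism groups of $x^2+xy+y^2$ and $x^2+y^2$.

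The main obstacle is the bookkeeping in the reduction step: showing that the assignment $(f_i,(x_0,y_0))\mapsto b\bmod 2n$ is surjective and that each fibre has exactly $w$ elements — equivalently, that two completions of primitive vectors producing the same residue $b$ differ precisely by a proper automorphism of $f_i$ (using positive definiteness to rule out unipotent automorphisms), and that forms $\langle n,b,c\rangle$ and $\langle n,b',c'\rangle$ with $b\not\equiv b'\Mod{2n}$ contribute genuinely distinct representations even when they fall in the same class. Once this is in place, the remaining work — multiplicativity of $N$ and the prime-power evaluation — is routine, the only mild care being at $p=2$ on account of the $D\equiv 0,1\Mod 4$ convention.
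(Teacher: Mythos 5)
Your first step is fine, and it is already more than the paper itself does: the paper's ``proof'' is only a citation of Hua (Chapter 12, Theorem 4.1), so you are supplying an actual argument, and the classical reduction you describe --- primitive representations of $n$ by a full set of class representatives, counted with the weight $w$ of proper automorphisms, correspond to residues $b \bmod 2n$ with $b^2\equiv D \Mod{4n}$ --- is correct. The genuine gap is in your second step. For an odd prime $p\nmid D$, Hensel's lemma lifts each of the $1+\legendre{D}{p}$ solutions of $b^2\equiv D \Mod{p}$ \emph{uniquely} to $\Mod{p^k}$, so the local count is $1+\legendre{D}{p}$ for every $k\ge 1$, not the geometric series $1+\legendre{D}{p}+\cdots+\legendre{D}{p}^{k}$ that you claim (and similarly the count at $2$ is $1+\legendre{D}{2}$ when $D\equiv 1\Mod 4$). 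Hence your route proves that the number of primitive representations equals $w\prod_{p\mid n}\bigl(1+\legendre{D}{p}\bigr)$, i.e.\ the sum in \eqref{eqn:total-repnum} restricted to \emph{squarefree} divisors $t$, which differs from \eqref{eqn:total-repnum} whenever $n$ is not squarefree.

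This is not a slip you can patch inside your framework, because the divisor sum $w\sum_{t\mid n}\legendre{D}{t}$ is the count of \emph{all} representations, not the primitive ones: for $n=9$, $D=-4$ the right-hand side of \eqref{eqn:total-repnum} is $4(1-1+1)=4$, yet $x^2+y^2=9$ has no primitive solutions (the $4$ counts $(\pm3,0)$, $(0,\pm3)$). Your ideal-theoretic aside shows the same confusion: $\#\{\mathcal{O}\text{-ideals of norm }n\}=\sum_{t\mid n}\legendre{D}{t}$ corresponds to all representations, while primitive representations correspond only to primitive ideals (those not divisible by a rational integer $>1$). To reach \eqref{eqn:total-repnum} from your reduction you must insert the standard passage from primitive to arbitrary representations, $\psi_{\mathrm{all}}(n)=\sum_{d^2\mid n}\psi_{\mathrm{prim}}(n/d^2)$, and then verify $\sum_{d^2\mid n}\prod_{p\mid (n/d^2)}\bigl(1+\legendre{D}{p}\bigr)=\sum_{t\mid n}\legendre{D}{t}$; alternatively, note that in the only places the paper invokes the lemma --- the squarefree integer $\sqf(n)$ in Lemma \ref{lem:rep-by-S_D} and a prime $p$ in Theorem \ref{thm:np^2-classify} --- primitive and arbitrary representations coincide, so your argument suffices for those applications after restricting the statement.
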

\begin{proof}
    See Theorem 4.1 in \cite[Chapter 12]{hua}.
\end{proof}

\begin{lem}
Let $\ell$ be a binary $\z$-lattice with $\mathfrak{n}(\ell)=\z$, and let $a$ and $b$ be positive integers such that $(a,b)=1$. Let $\mathcal{A}_{a,b}=\{an+b \mid n\in\z^+\}$ be the set of an arithmetic progression. Assume that $\mathcal{A}_{a,b}\cap Q(\gen(\ell))\neq\varnothing$. Then the set $\mathcal{A}_{a,b}\cap Q(\gen(\ell))$ contains infinitely many primes.
\end{lem}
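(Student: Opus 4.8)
The plan is to reduce the assertion to Dirichlet's theorem on primes in arithmetic progressions, using the congruential description of the integers locally represented by $\gen(\ell)$, together with Lemma~\ref{lem:total-repnum} and the composition laws to settle the one genuinely nontrivial point. First come two reductions. Since $\mathfrak n\ell=\z$, the gcd of the two diagonal Gram entries of $\ell$ is $1$, so the attached binary form is primitive and $\ell$ determines a class $\mca\in\mathfrak S_D$ with $D:=D_\ell$; moreover its genus is $\gen(\mca)$, $Q(\gen(\ell))=Q(\gen(\mca))$, and $\mathfrak n(\ell\otimes\z_p)=\z_p$ for every prime $p$. By hypothesis fix $m_0\in\mca_{a,b}\cap Q(\gen(\ell))$; as $(a,b)=1$ we get $(m_0,a)=1$, and likewise $(q,a)=1$ for every prime $q\equiv b\Mod{a}$.

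The key step is to characterise, for a prime $q\nmid 2D$, when $q\in Q(\gen(\ell))$, i.e.\ when $q\ra(\ell\otimes\z_p)$ for every prime $p$. For an odd prime $p\nmid 2D$ with $p\neq q$ there is no condition, since a unimodular binary $\z_p$-lattice $\langle1,\delta\rangle$ represents every $p$-adic unit (pigeonhole modulo $p$). For $p=q$ one has $\ell\otimes\z_q\cong\langle1,\delta\rangle$ with $\delta\equiv-D$ modulo squares, and a Hensel-lifting computation gives $q\ra(\ell\otimes\z_q)$ iff $\legendre{D}{q}=1$. For $p\mid 2D$, $q$ is a $p$-adic unit; since $\mathfrak n(\ell\otimes\z_p)=\z_p$ the lattice $\ell\otimes\z_p$ represents a $p$-adic unit, and the $p$-adic units it represents form a nonempty union of square classes, so membership of $q$ is a nonempty congruence condition modulo $p^{c_p}$, where $1+p^{c_p}\z_p\subseteq(\z_p^\times)^2$. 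Combining these, and using that $\legendre{D}{\cdot}$ is a real Dirichlet character of conductor dividing $|D|$, one obtains, in the language of Gauss's theory of genera, a modulus $N\mid 8|D|$ and a set $\mathcal R\subseteq(\z/N\z)^\times$ — the reduced residues on which every assigned character of $D$ takes the same value as on $\mca$ — such that
\[
q\in Q(\gen(\ell))\iff q\bmod N\in\mathcal R\qquad\text{for primes }q\nmid 2D.
\]

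It then suffices to show that $\mathcal R\neq\varnothing$ and that $\mathcal R$ meets the class of $b$ modulo $(a,N)$: granting this, choose a reduced residue $r$ modulo $M:=\operatorname{lcm}(a,N)$ with $r\equiv b\Mod{a}$ and $r\bmod N\in\mathcal R$; Dirichlet's theorem then produces infinitely many primes $q\equiv r\Mod{M}$, each of which lies in $\mca_{a,b}\cap Q(\gen(\ell))$. Nonemptiness of $\mathcal R$ is the statement that $\gen(\mca)$ represents infinitely many primes, which I would derive from Lemma~\ref{lem:total-repnum}: every prime $q\nmid D$ with $\legendre{D}{q}=1$ has $\psi(q)=w\bigl(1+\legendre{D}{q}\bigr)=2w>0$ primitive representations, these are confined to a single genus (split off the represented unit vector $p$-adically at each $p\mid 2D$), and as $q$ varies that genus exhausts all genera of discriminant $D$. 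For the compatibility I would use $m_0$: each prime $p\mid(a,N)$ divides $2D$ and, since $(m_0,a)=1$, does not divide $m_0$, so $m_0$ is a $p$-adic unit with $m_0\ra(\ell\otimes\z_p)$; hence every assigned character of $p$-power conductor takes the same value on $m_0$ as on $\mca$ (because $m_0\in Q(\gen(\mca))$), and those values are consistent with $m_0\equiv b\Mod{a}$, while at the primes $p\mid N$ not dividing $a$ the assigned characters may be prescribed freely; a residue $r$ with the required properties can thus be assembled.

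I expect this last step to be the main obstacle. One has to extract, from the single value $m_0$ — which may be imprimitive and need not be coprime to $D$ — the simultaneous solvability of the finitely many bad-prime conditions defining $\mathcal R$ together with the progression $\mca_{a,b}$. The delicate primes are those dividing $\gcd(m_0,2D)$, and this is precisely where the composition laws enter: for each such $p$ one composes the class representing $m_0$ with a class representing a $p$-adic unit, thereby replacing $m_0$ by a companion value of $\gen(\mca)$ lying in the same progression and coprime to $2D$. Once this reduction is available, the genus-character description above together with Dirichlet's theorem completes the proof.
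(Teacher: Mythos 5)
The paper's own proof of this lemma is a one-line citation of Meyer's theorem \cite{m}, so your proposal amounts to reproving that theorem via genus characters, CRT and Dirichlet. The strategy is viable, and your local analysis (no condition at $p\nmid 2Dq$, the condition $\legendre{D}{q}=1$ at $p=q$, a congruence condition modulo $p^{c_p}$ at $p\mid 2D$) as well as the use of $m_0$ at the primes $p\mid\gcd(a,N)$ are correct. But as written the argument has a genuine gap, and it is not the one you flag at the end. At a prime $p\mid 2D$ with $p\nmid a$ you never need $m_0$ at all: since $\mathfrak{n}(\ell\otimes\z_p)=\z_p$, some unit square class is represented by $\ell\otimes\z_p$, and you may simply prescribe $r$ to lie in it. Consequently your closing ``fix'' via composition is both unnecessary and unworkable: if $p\mid\gcd(m_0,2D)$, composing the class representing $m_0$ with a class representing a $p$-adic unit only yields values of the form $m_0u$, which are still divisible by $p$ and in general no longer lie in $\mathcal{A}_{a,b}$; it cannot produce a companion value in the same progression that is coprime to $2D$.

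The step that actually needs proof is hidden in your asserted equivalence ``$q\in Q(\gen(\ell))\iff q\bmod N\in\mathcal R$'' and resurfaces when you assemble $r$: after imposing $r\equiv b\Mod{a}$ and, at each $p\mid 2D$, a represented unit class, you must still know that a prime $q$ in the resulting class satisfies $\legendre{D}{q}=1$, i.e.\ is represented at the place $q$ itself; this value is \emph{determined}, not free, once the classes at $p\mid 2D$ are fixed, so a priori the assembled class could contain no locally represented primes at all. The consistency does hold: if a unit $q$ is represented by $\q_p\otimes\ell$ then the Hilbert symbol $(q,D)_p$ equals the Hasse invariant of $\q\otimes\ell$ at $p$, and the product formula, together with triviality of the invariant at $\infty$ and at $v\nmid 2D$, gives $\legendre{D}{q}=\prod_{p\mid 2D}(q,D)_p=1$. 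But this duplication-theoretic relation is exactly what your equivalence silently uses and is nowhere established; relatedly, ``as $q$ varies that genus exhausts all genera'' is asserted without proof (and becomes unnecessary once the product relation is in place). Supplying this verification would complete your argument; alternatively the lemma is precisely Meyer's theorem and can simply be cited, as the paper does.
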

\begin{proof}
     This is a direct consequence of \cite{m}.
\end{proof}

\subsection{Binary isolations of unary lattices}

Let us consider a binary $\z$-lattice $\ell$ with $\mathfrak{n}\ell=\z$. By the form class corresponding to $\ell$, we mean the form class $\mathcal{C} \in \mathfrak{S}_{D_\ell}$ such that $f_\ell\in  \mathcal{C}$. Note that $f_\ell$ depends on the choice of the basis for $\ell$. For example, if we consider two bases $\{e_1,e_2\}$ and $\{e_1,-e_2\}$ of $\ell$, both $\mathcal{C}$ and $\mathcal{C}^{-1}$ may correspond to the same lattice $\ell$. Thus, the choice of $\mathcal{C}$ corresponding to $\ell$ depends on the choice of basis. When there is no ambiguity, we simply denote the form class corresponding to $\ell$ by $\mathcal{C}_\ell$.

\begin{prop}\label{prop:iso-unary}
Let $n$ be a positive integer and let $\ell$ be a binary $\z$-lattice with $\mathfrak{n}\ell=\z$. Let $p$ be a prime such that either
	\begin{enumerate}[label={\rm{(\arabic*)}}]
		\item $\left(\frac{D_\ell}{p}\right) =-1$ or
		\item $\left(\frac{D_\ell}{p}\right) = 1$ and $p\in Q(\mcd)$ for some ambiguous class $\mcd\in \mathfrak{S}_{D_\ell}$.
	\end{enumerate}
	If $np^2$ is represented by $\ell$, then $n$ is represented by $\ell$.
\end{prop}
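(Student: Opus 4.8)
\emph{Strategy.} The plan is to reduce, in both cases, to the case that $np^2$ is represented \emph{primitively} by $\ell$. If $v\in\ell$ satisfies $Q(v)=np^2$, write $v=gw$ with $w\in\ell$ primitive and $g\in\z^+$ the content of $v$ (the largest positive integer with $v/g\in\ell$), so that $g^2Q(w)=np^2$. If $p\mid g$ then $v/p\in\ell$ and $Q(v/p)=n$, so $n\ra\ell$. If $p\nmid g$ then, since $g^2\mid np^2$, we get $g^2\mid n$, and writing $n=g^2m$ we obtain a primitive $w$ with $Q(w)=mp^2$; it then suffices to prove $m\ra\ell$, for if $Q(v')=m$ with $v'\in\ell$ then $gv'\in\ell$ has norm $n$. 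So it is enough to show: if $w\in\ell$ is primitive with $Q(w)=mp^2$ for a positive divisor $m$ of $n$, then case (1) is impossible, while in case (2) one has $m\ra\ell$.

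\emph{Case (1).} I would extend $w$ to a basis $\{w,u\}$ of $\ell$. Since $\mathfrak n\ell=\z$ this yields $f_\ell=mp^2x^2+bxy+cy^2$ with $b,c\in\z$ and $D_\ell=b^2-4mp^2c$, hence $b^2\equiv D_\ell\pmod{p^2}$ when $p$ is odd and $b^2\equiv D_\ell\pmod 8$ when $p=2$. For $p$ odd this forces $D_\ell$ to be a square mod $p$ (including $0$), contradicting $\left(\frac{D_\ell}{p}\right)=-1$; for $p=2$ it forces $D_\ell\in\{0,1,4\}\pmod 8$, contradicting $\left(\frac{D_\ell}{2}\right)=-1$, i.e.\ $D_\ell\equiv 5\pmod 8$. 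Thus no such primitive $w$ exists, so necessarily $p\mid g$ in the reduction and $n\ra\ell$.

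\emph{Case (2).} The key initial observation is that the hypothesis forces the form class $\mcc_p$ of a prime above $p$ to be ambiguous. Since $p$ is prime, every representation of $p$ is primitive; and since $\left(\frac{D_\ell}{p}\right)=1$, the only classes primitively representing $p$ are $\mcc_p$ and $\mcc_p^{-1}$, namely the classes of $px^2+\beta xy+\gamma y^2$ for the two solutions $\beta\pmod{2p}$ of $\beta^2\equiv D_\ell\pmod{4p}$ (which are negatives of each other modulo $2p$). So $p\ra\mcd$ with $\mcd$ ambiguous yields $\mcc_p^2=\mathcal E$. I would then invoke the composition law via the isomorphism $\mathfrak S_{D_\ell}\cong\mathrm{Pic}(\mathcal O)$ with $\mathcal O$ the order of discriminant $D_\ell$: the primitive representation $Q(w)=mp^2$ by $\mcc_\ell$ corresponds to a primitive integral ideal $\ideal$ of $\mathcal O$ with $\norm(\ideal)=mp^2$ and $[\ideal]=\mcc_\ell$. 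Since $\ideal$ is primitive, exactly one of the primes $\p,\bar\p$ above $p$ divides it, and to the exact exponent $v_p(m)+2\ge 2$; hence $\ideal\p^{-2}$ (resp.\ $\ideal\bar\p^{-2}$) is a primitive integral ideal of norm $m$ whose class is $[\ideal]\cdot\mcc_p^{\mp 2}=\mcc_\ell\cdot\mathcal E=\mcc_\ell$. Therefore $\mcc_\ell$ represents $m$, and $n\ra\ell$ follows.

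\emph{Anticipated obstacle.} The step I expect to demand the most care is the ideal-theoretic part of case (2) when $\gcd(n,D_\ell)>1$, because the correspondence between primitive representations and primitive $\mathcal O$-ideals is transparent only for integers prime to the conductor of $\mathcal O$. The prime divisors of $D_\ell$ would then have to be handled directly --- e.g.\ by writing $m$ as a product of a part coprime to $D_\ell$ and a part supported on the ramified primes (which occur with the same exponent in $m$ and in $mp^2$, as $p\nmid D_\ell$) and arguing at those primes by hand, or by working throughout with Dirichlet composition of concordantly chosen representatives. Case (1) is elementary and poses no difficulty.
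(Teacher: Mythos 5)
Your proposal is correct, but it follows a genuinely different route from the paper's, so let me compare. For case (1) the paper argues locally: $\left(\frac{D_\ell}{p}\right)=-1$ makes $\ell_p\cong\langle 1,-\Delta_p\rangle$ unimodular and anisotropic over $\z_p$, so a vector of norm $np^2$ lies in $p\ell$ and dividing by $p$ gives a representation of $n$; your completion-to-a-basis congruence $D_\ell\equiv b^2\Mod{p^2}$ (resp.\ $\Mod{16}$ for $p=2$) is the elementary global rendering of the same fact and is fine, including the $p=2$ case. For case (2) the paper stays entirely inside the value-set calculus of form classes: it applies the lemma of Earnest and Fitzgerald twice, getting from $np\cdot p\in Q(\mcc)$ and $p\in Q(\mcd)$ that $np\in Q(\mcc\mcd^{\pm1})$ and then $n\in Q(\mcc\mcd^{\pm1}\mcd^{\pm1})$, and all four classes collapse to $\mcc$ because $\mcd^2=\mathcal E$; note this argument never uses $\left(\frac{D_\ell}{p}\right)=1$, the determination of which classes represent $p$, or any ideal theory. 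You instead pass to $\mathrm{Pic}$ of the order of discriminant $D_\ell$, observe that the classes representing $p$ are exactly $\mcc_p^{\pm1}$ so the ambiguity hypothesis forces $[\p]^2=\mathcal E$, and cancel $\p^2$ from an invertible ideal of norm $mp^2$; in effect you re-prove the Earnest--Fitzgerald mechanism in the special case at hand. Your ``anticipated obstacle'' about the conductor is not an actual gap: the only two directions of the form--ideal dictionary you use --- a primitive representation of $N$ by a primitive form of discriminant $D$ yields a proper (invertible) ideal of norm $N$ in the corresponding class, and an integral proper ideal $\mathfrak b$ always represents its norm because $\norm(\mathfrak b)\in\mathfrak b$ (since $\mathfrak b\bar{\mathfrak b}=\norm(\mathfrak b)\mathcal O$ for proper ideals) --- hold with no coprimality assumption on $N$ or $m$ (see \cite{cox}); coprimality to the conductor is only needed for the finer bijective or counting statements, which you never invoke, and the local factorization $\ideal_p=\p^a\bar\p^b$ with $\min(a,b)=0$ is legitimate precisely because $\left(\frac{D_\ell}{p}\right)=1$ forces $p\nmid D_\ell$, hence $p$ prime to the conductor. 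In sum, the paper's route buys brevity (given the cited lemma on represented value sets), while yours buys independence from that lemma at the cost of importing the standard order--ideal correspondence; both are sound.
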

\begin{proof}
We first assume that $\left(\frac{D_\ell}{p}\right)=-1$ and let $Q(x)=np^2$ for some $x\in \ell$. From the assumption,  we have $\ell_p\cong \langle 1,-\Delta_p\rangle$ which is anisotropic over $\z_p$. Here $\Delta_p$ is a quadratic non-residue in $\z_p^\times$. Hence we have $x=py$ for some $y\in\ell$ and $Q(y)=n$. Thus $n$ is represented by $\ell$.
	
Now, assume that $p\in Q(\mcd)$ for some ambiguous class $\mcd\in \mathfrak{S}_D$ and let $\mathcal{C}=\mathcal{C}_\ell$. Since $np\cdot p \in Q(\mcc)$ and $p\in Q(\mcd)$, it follows from \cite[Lemma 2.2]{ef} that either $np\in Q(\mcc\mcd)$ or $np \in Q(\mcc{D^{-1}})$. Applying it once again, one may conclude that $n$ is represented by either
	$$
	\mcc\mcd\cdot\mcd,\ \mcc\mcd\cdot\mcd^{-1},\ \mcc\mcd^{-1}\cdot\mcd, \text{ or } \mcc\mcd^{-1}\cdot\mcd^{-1}.
	$$
	Since $\mcd^2=\mathcal{E}_D$, all the above four classes are equal to $\mcc$. Thus $n$ is represented by $\ell$.
\end{proof}

\begin{thm}\label{thm:no-binary-iso}
    Let $n$ be a positive integer. Then there is no binary isolation of $\langle n \rangle$.
\end{thm}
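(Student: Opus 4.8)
The plan is to argue by contradiction and then invoke Proposition~\ref{prop:iso-unary}(1). First I would record what an isolation of $\langle n\rangle$ amounts to: the $\z$-lattices represented by $\langle n\rangle=\z e$ (with $Q(e)=n$) are exactly its sublattices $\z(ke)=\langle nk^2\rangle$ for $k\ge 1$, so a binary $\z$-lattice $F$ is an isolation of $\langle n\rangle$ precisely when $F$ represents $nk^2$ for every integer $k\ge 2$ while $F$ does not represent $n$. Suppose such an $F$ exists. If $\mathfrak s F=t\z$, then $t$ divides every value represented by $F$, so $t\mid nk^2$ for all $k$ and hence $t\mid n$ (take $k$ coprime to $t$); replacing $F$ by $\tfrac1t F$ and $n$ by $n/t$, we may assume $F$ is primitive. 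The norm ideal $\mathfrak n F$ plays no role here: the argument of case~(1) of Proposition~\ref{prop:iso-unary} --- that a vector $x\in\ell_p$ with $Q(x)=np^2$ must be divisible by $p$, since $\ell_p$ is anisotropic --- depends only on $\left(\tfrac{D_\ell}{p}\right)=-1$ and the ensuing local structure of $\ell_p$, not on the hypothesis $\mathfrak n\ell=\z$.

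The essential step is to produce a prime $p$ satisfying hypothesis~(1) of Proposition~\ref{prop:iso-unary}. Since $F$ is positive definite, $D_F=-4\,dF<0$, and a negative integer is never a perfect square; hence the Kronecker symbol $p\mapsto\left(\tfrac{D_F}{p}\right)$ is a non-principal character, and $\left(\tfrac{D_F}{p}\right)=-1$ for infinitely many primes $p$ --- concretely, for the primes inert in $\q(\sqrt{D_F})$, a set of density $\tfrac12$. Fix any such $p$; then $p\nmid D_F$, so $p$ satisfies condition~(1) of the proposition.

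Finally, since $p\ge 2$, the integer $np^2$ is a proper subform of $\langle n\rangle$, hence is represented by $F$. Proposition~\ref{prop:iso-unary}(1) then forces $n$ to be represented by $F$, contradicting the assumption that $F$ is an isolation of $\langle n\rangle$. Therefore $\langle n\rangle$ has no binary isolation. The single point of real substance is the existence of the prime $p$ with $\left(\tfrac{D_F}{p}\right)=-1$, and this is exactly where the negativity of $D_F$ --- which makes it automatically non-square --- does the work, sparing us any case analysis on the discriminant. I foresee no genuine difficulty; only the bookkeeping for $\mathfrak s F$ and $\mathfrak n F$ needs a little care, handled as indicated above. (Note that alternative~(2) of Proposition~\ref{prop:iso-unary}, concerning ambiguous classes, is not needed for this particular theorem.)
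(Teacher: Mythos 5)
Your proposal is correct and follows essentially the same route as the paper: normalize the isolation, pick a prime $p$ with $\left(\frac{D_F}{p}\right)=-1$ (which exists since $D_F<0$ is not a square), note that $np^2$ is a proper subform of $\langle n\rangle$ and hence represented, and apply Proposition~\ref{prop:iso-unary}(1) to get the contradiction. Your extra bookkeeping on the scale/norm ideal and the explicit justification of the existence of such a prime are just more detailed versions of the paper's ``without loss of generality'' and unstated density remark.
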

\begin{proof}

Assume that $\ell$ is a binary isolation of $\langle n \rangle$. Without loss of generality, we may assume that $\mathfrak{n}\ell=\z$. Note that $\ell$ represents $np^2$ for a prime $p$ satisfying $\legendre{D_\ell}{p}=-1$. Hence by Proposition \ref{prop:iso-unary}, $\ell$ represents $n$. This is a contradiction.
\end{proof}

Let $n$ be a positive integer and $D$ be a negative integer with $D\equiv 0,1\Mod{4}$. For a binary $\z$-lattice $\ell$ with $\mathfrak{n}\ell=\z$ and $D_\ell=D$ such that $n$ is not represented by $\ell$, we characterize the primes $p$ for which $np^2$ is represented by $\ell$, even though $\ell$ is not an isolation of $n$.

\begin{lem}\label{lem:rep-by-S_D}
    Let $n\in\n$ and let $\sqf(n)$ denote the squarefree part of $n$. Let $D$ be a negative integer with $D\equiv0,1\Mod{4}$ and $(n,D)=1$. The following are equivalent:
    \begin{enumerate}[label={\rm(\arabic*)}]
        \item No binary $\z$-lattice with $\mathfrak{n}\ell=\z$ and $D_\ell=D$ represents $n$.
        \item There is a prime $p\mid\sqf(n)$ such that $\left(\frac{D}{p}\right)=-1$.
    \end{enumerate}
\end{lem}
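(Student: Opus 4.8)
The plan is to rephrase condition~(1) inside the form class group $\mathfrak{S}_{D}$ and then read off the answer from the representation count of Lemma~\ref{lem:total-repnum}. First I would set up the dictionary between lattices and forms: a binary $\z$-lattice $\ell$ with $\mathfrak{n}\ell=\z$ and $D_\ell=D$ has $\mathfrak{n}f_\ell=\z$ and $D_{f_\ell}=D$, so $f_\ell$ lies in a class of $\mathfrak{S}_{D}$, while conversely, given $\mathcal C\in\mathfrak{S}_D$ and a form $ax^2+bxy+cy^2$ in it, the lattice $\z e_1+\z e_2$ with $Q(e_1)=a$, $Q(e_2)=c$, $B(e_1,e_2)=b/2$ has norm ideal $\z$ and discriminant $D$. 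Since ``$\ell$ represents $n$'' means $n\in Q(\mcc_\ell)$, condition~(1) is equivalent to: $n$ is represented by no class of $\mathfrak{S}_{D}$.

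Next I would pass from representations to primitive ones. If a form $f$ in some class of $\mathfrak{S}_D$ satisfies $f(x,y)=n$, then $g:=\gcd(x,y)\ge1$ (as $n>0$) and $f(x/g,y/g)=n/g^{2}$ is a primitive representation; conversely a primitive representation of $n/g^{2}$ by $f$ gives $f(gx,gy)=n$. Hence $n$ is represented by some class of $\mathfrak{S}_D$ if and only if, for some $g$ with $g^{2}\mid n$, the integer $n/g^{2}$ is primitively represented by some class of $\mathfrak{S}_D$. Because $(n,D)=1$ forces $(n/g^{2},D)=1$, Lemma~\ref{lem:total-repnum} applies to $n/g^{2}$, and since the constant $w$ appearing there is positive, this last condition is equivalent to $\sum_{t\mid n/g^{2}}\legendre{D}{t}>0$ for some $g$ with $g^{2}\mid n$.

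The remainder is bookkeeping with the completely multiplicative function $t\mapsto\legendre{D}{t}$. For any positive integer $m$ with $(m,D)=1$,
\[
\sum_{t\mid m}\legendre{D}{t}=\prod_{p^{e}\,\|\,m}\Bigl(1+\legendre{D}{p}+\cdots+\legendre{D}{p}^{e}\Bigr),
\]
and as $\legendre{D}{p}\in\{1,-1\}$ for each $p\mid m$, the factor at $p$ equals $e+1$ if $\legendre{D}{p}=1$ and equals $1$ or $0$ according to the parity of $e$ if $\legendre{D}{p}=-1$. Thus the sum is positive precisely when every prime $p$ with $\legendre{D}{p}=-1$ divides $m$ to an even power. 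Applying this with $m=n/g^{2}$, whose $p$-adic valuations have the same parity as those of $n$ since $2v_p(g)$ is even, the positivity no longer depends on $g$ and becomes: no prime $p$ with $\legendre{D}{p}=-1$ divides $\sqf(n)$. This is exactly the negation of~(2), and by the previous two paragraphs it is also the negation of~(1); the boundary case $\sqf(n)=1$ (where $n$ is a perfect square, represented by the principal class $\mathcal E$) is included. Therefore (1) and (2) are equivalent.

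The one point that repays a second look is the prime $2$ in the case $2\mid\sqf(n)$: then $D$ is odd, so $D\equiv1$ or $5\Mod{8}$ by the standing hypothesis $D\equiv0,1\Mod{4}$, and one should check that the Kronecker value $\legendre{D}{2}$ (which is $+1$ for $D\equiv1\Mod{8}$ and $-1$ for $D\equiv5\Mod{8}$) correctly records whether an even integer can be primitively represented by a form of discriminant $D$. This compatibility is subsumed in Lemma~\ref{lem:total-repnum} (equivalently, in the classical fact that $m$ coprime to $D$ is primitively represented by some form of discriminant $D$ if and only if $D$ is a square modulo $4m$, which at $m=2$ forces $D\equiv1\Mod{8}$), so no ad hoc $2$-adic analysis is required and the argument uses nothing beyond that lemma.
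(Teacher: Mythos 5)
Your proof is correct and follows essentially the same route as the paper: both arguments reduce everything to the counting formula of Lemma~\ref{lem:total-repnum}, decompose arbitrary representations of $n$ into primitive representations of $n/g^2$ (the paper's sum $\sum_{k\mid m}\psi(n/k^2)$ is exactly your decomposition), and read off the vanishing or positivity of the count from the multiplicativity of the Kronecker symbol via the parity of $\ord_p(n)$ at primes with $\legendre{D}{p}=-1$. The only cosmetic difference is that the paper treats the direction (1)$\Rightarrow$(2) separately by noting that any representation of the squarefree integer $\sqf(n)$ is automatically primitive, whereas you run a single biconditional computation covering both directions.
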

\begin{proof}
    First, we show that (1) implies (2). If $\sqf(n)=1$, that is, $n=m^2$ for some $m\in\n$, then we have $m^2\in Q(\mathcal{E})$. Hence we may assume that $\sqf(n)\neq1$. Note that any representation of $\sqf(n)$ by a binary $\z$-lattice is primitive. Hence by Lemma \ref{lem:total-repnum} the total number of representations of $\sqf(n)$ by classes in $\mathfrak{S}_D$ is 
    \[
        \psi(\sqf(n))=w\sum_{t\mid \sqf(n)} \legendre{D}{t} = w \prod_{\substack{q\mid \sqf(n) \\ q\text{ prime}}} \left(1+\legendre{D}{q}\right),
    \]
    where we use the multiplicativity of the Kronecker symbol. Moreover, $\psi(\sqf(n))$ equals to zero, since $\sqf(n)$ should not be represented by any primitive binary $\z$-lattice of discriminant $D$. Thus, at least one prime $q$ dividing $\sqf(n)$ should satisfy $\legendre{D}{q}=-1$, which implies (2).

 To show that  (2) implies (1), write $n=\sqf(n)m^2$ with $m\in\n$. Note that the total number $N$ of representations of $n$ by classes in $\mathfrak{S}_D$ is given by
    \[
        N=\sum_{k\mid m} \psi\left(\frac{n}{k^2}\right)=\sum_{k\mid m} \psi\left(\sqf(n)k^2\right)=\sum_{k\mid m}w \prod_{\substack{q\mid \sqf(n)k^2 \\ q\text{ prime}}} \left(1+\legendre{D}{q}+\cdots+\legendre{D}{q}^{\ord_q(\sqf(n)k^2)}\right).
    \]
    Note that every product in the sum is zero since there is a prime $p$ dividing $\sqf(n)$ such that $\legendre{D}{p}=-1$ and $\ord_p(\sqf(n)k^2)$ is an odd integer for any positive integer $k$ dividing $m$. Therefore $N=0$, which implies (1).
\end{proof}

\begin{lem}\label{lem:rep-by-mcc}
    Let $D$ be a negative integer with $D\equiv 0,1\Mod{4}$, and let $\mcc\in\mathfrak{S}_D$. Let $n$ be a positive integer and let $p$ be a prime not dividing $D$ such that $np^2\ra \mcc$. We have
    \begin{enumerate}[label={\rm(\arabic*)}]
        \item $n\ra \gen(\mcc)$.
        \item If $n\nra \mcc$, then $\legendre{D}{p}=1$. 
    \end{enumerate}

\end{lem}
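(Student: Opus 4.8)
The plan is to reduce both parts to Proposition~\ref{prop:iso-unary} and to the division lemma \cite[Lemma 2.2]{ef} already used above, separating cases according to the value of $\legendre{D}{p}$. Since $p\nmid D$, the Kronecker symbol does not vanish, so $\legendre{D}{p}\in\{1,-1\}$ and the two cases below are exhaustive.

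For part (2) I would argue by contraposition. Assume $\legendre{D}{p}\neq1$, hence $\legendre{D}{p}=-1$. Choose any binary $\z$-lattice $\ell$ with $f_\ell\in\mcc$; then $\mathfrak n\ell=\z$, $D_\ell=D$, and $np^2\ra\ell$, so Proposition~\ref{prop:iso-unary}(1) gives $n\ra\ell$, i.e.\ $n\ra\mcc$. Thus $n\nra\mcc$ forces $\legendre{D}{p}=1$.

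For part (1), if $\legendre{D}{p}=-1$ the very same application of Proposition~\ref{prop:iso-unary}(1) gives $n\ra\mcc$, which a fortiori implies $n\ra\gen(\mcc)$ because $\mcc\in\gen(\mcc)$. So assume $\legendre{D}{p}=1$. As $p\nmid D$, the prime $p$ is represented by some class $\mathcal P\in\mathfrak S_D$ (standard: $p$ does not divide the conductor of the order of discriminant $D$, so $p$ splits into proper ideals of norm $p$; cf.\ \cite{cox}). Now apply \cite[Lemma 2.2]{ef} twice: from $np^2=(np)\cdot p\in Q(\mcc)$, $p\in Q(\mathcal P)$, and $(p,D)=1$, one gets $np\in Q(\mcc\mathcal P)$ or $np\in Q(\mcc\mathcal P^{-1})$; applying the lemma again to $np=n\cdot p$ (with the same $p\in Q(\mathcal P)$ and $(p,D)=1$) then shows that $n$ is represented by one of $\mcc\mathcal P^{\pm1}\cdot\mathcal P^{\pm1}$, that is, by $\mcc$, $\mcc\mathcal P^{2}$, or $\mcc\mathcal P^{-2}$. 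Since $\mathcal P^{\pm2}\in\gen(\mathcal E)$, each of these classes lies in $\gen(\mcc)=\mcc\cdot\gen(\mathcal E)$, and therefore $n\ra\gen(\mcc)$.

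Everything here is bookkeeping once Proposition~\ref{prop:iso-unary} and \cite[Lemma 2.2]{ef} are granted; the only points requiring attention are the exhaustiveness of the case split (where $p\nmid D$ is used), the extraction of an honest class $\mathcal P\in\mathfrak S_D$ with $p\ra\mathcal P$ in the split case, and keeping track of the inverses through the two applications of the division lemma. I do not expect a genuine obstacle. Part (1) also admits a purely local proof: for $q\neq p$ one divides $np^2$ by $p^2\in\z_q^\times$, and at $q=p$ one uses that $\ell_p$ represents $n$ whenever it represents $np^2$ — being anisotropic and representing only values $up^{2k}$ when $\legendre{D}{p}=-1$, and isometric over $\z_p$ to a form representing all of $\z_p$ when $\legendre{D}{p}=1$ — but I would present the composition-theoretic argument, which fits the theme of the paper.
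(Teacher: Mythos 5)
Your proof is correct, but for the main case it takes a different route from the paper. Part (2) and the case $\legendre{D}{p}=-1$ of part (1) coincide with the paper's argument (anisotropy of $\mcc_p$, i.e.\ Proposition \ref{prop:iso-unary}(1)). For the case $\legendre{D}{p}=1$, however, the paper argues purely locally: $\mcc_p\cong\left(\begin{smallmatrix}0&1/2\\1/2&0\end{smallmatrix}\right)$ represents every element of $\z_p$, and at each prime $q\neq p$ one divides by the unit $p^2$, so $n\ra\gen(\mcc)$ in two lines — essentially the alternative you sketch at the end. Your composition-theoretic argument via \cite[Lemma 2.2]{ef} is also valid (it is exactly the mechanism the paper deploys later in the proof of Theorem \ref{thm:np^2-classify}(3)) and in fact yields the stronger conclusion that $n$ is globally represented by one of $\mcc$, $\mcc\mathcal{P}^{2}$, $\mcc\mathcal{P}^{-2}$; the price is that it needs two extra standard inputs that the local proof avoids. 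First, the existence of a class $\mathcal{P}$ with $p\ra\mathcal{P}$ does not follow from $p\nmid D$ alone, as your parenthetical suggests — if $\legendre{D}{p}=-1$ no class represents $p$; the correct justification inside your case is $\legendre{D}{p}=1$, e.g.\ via Lemma \ref{lem:total-repnum}, which gives $\psi(p)=w\bigl(1+\legendre{D}{p}\bigr)=2w>0$ (this is how the paper argues in Theorem \ref{thm:np^2-classify}(3)). Second, passing from ``$n$ is represented by a class lying in the coset $\gen(\mcc)=\mcc\cdot\gen(\mathcal{E})$'' to the paper's meaning of $n\ra\gen(\mcc)$ (local representability by $\mcc$) uses the principal genus theorem identifying that coset with the classical genus of $\mcc$; this is standard (cf.\ \cite{cox}) and implicit in the paper's conventions, but it is worth acknowledging. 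With these two points made precise, your proof is complete.
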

\begin{proof}
If $\legendre{D}{p}=-1$, then the same argument as in Proposition \ref{prop:iso-unary} implies that $n\ra \mcc$. This proves part (2) and also implies that $n\ra \gen(\mcc)$. If $\legendre{D}{p}=1$, then we have
\[
\mcc_p\cong \left(\begin{smallmatrix} 0& \frac{1}{2} \\ \frac{1}{2} & 0\end{smallmatrix}\right),
\] 
and hence $n\ra \mcc_p$. For each prime $q\neq p$, we have $n\ra\mcc_q$ since $np^2\ra \mcc$. Thus $n\ra \gen(\mcc)$. 
\end{proof}

\begin{thm}\label{thm:np^2-classify}
Let $D$ be a negative integer with $D\equiv 0,1\Mod{4}$ and let $\mcc\in\mathfrak{S}_D$. Let $n$ be a positive integer that is not represented by $\mcc$.

	\begin{enumerate}[label={\rm(\arabic*)}]
    \item If $n$ is not represented by any class in $\mathfrak{S}_D$, then neither is $np^2$ for any prime $p$. In particular, $np^2\nra\mcc$ for any prime $p$.
	\item Assume that $n$ is represented by a class in $\mathfrak{S}_D$. If $n\nra \gen(\mcc)$ and $np^2\ra\mcc$ for some prime $p$, then $p$ divides $D$.
		\item Assume that $n\ra \gen(\mcc)$ and let $p$ be a prime. Then $np^2\ra\mcc$ and $p$ does not divide $D$ if and only if $p\ra\mcd$ for some $\mcd\in\mathfrak{S}_D$ such that $n\ra \mcc\mcd^2\in\gen(\mcc)$.
       
	\end{enumerate}
\end{thm}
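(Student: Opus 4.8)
The plan is to translate everything into the ideal class group of the order $\mathcal{O}$ of discriminant $D$ in $\q(\sqrt{D})$ --- which is precisely the content of the composition law --- and then to argue by factoring an ideal of norm $np^{2}$ into prime ideals over $p$ and redistributing those factors. Under this dictionary $\mcc\in\mathfrak{S}_D$ corresponds to an invertible ideal class $[\mathfrak{c}]$ (the opposite class $\mcc^{-1}$ reflecting the choice of basis of the lattice), and ``$m\ra\mcc$'' means $m=\norm(\mathfrak{a})$ for some invertible $\mathcal{O}$-ideal $\mathfrak{a}$ (possibly non-primitive) with $[\mathfrak{a}]\in\{[\mathfrak{c}],[\mathfrak{c}]^{-1}\}$.

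Parts (1) and (2) should be short. For (2): if $np^{2}\ra\mcc$ and $p\nmid D$, then Lemma~\ref{lem:rep-by-mcc}(1) gives $n\ra\gen(\mcc)$, contradicting $n\nra\gen(\mcc)$, so $p\mid D$. For (1) I would argue contrapositively: suppose $np^{2}\ra\mca$ for some $\mca\in\mathfrak{S}_D$. If $p\nmid D$, Lemma~\ref{lem:rep-by-mcc}(1) again gives $n\ra\gen(\mca)$, whence $n$ is represented by some class in $\gen(\mca)\subseteq\mathfrak{S}_D$. If $p\mid D$, then $p$ is ramified ($\p$ the prime over it, $\norm(\p)=p$, $\p^{2}=p\mathcal{O}$); writing $np^{2}=\norm(\mathfrak{a})$ with $\mathfrak{a}=\p^{s}\mathfrak{b}$, $\mathfrak{b}$ prime to $p$, forces $s=v_p(n)+2$ and $\norm(\mathfrak{b})=n/p^{v_p(n)}$, so $n=\norm(\mathfrak{b}\,\p^{\,v_p(n)})$ is represented by $[\mathfrak{b}\,\p^{\,v_p(n)}]\in\mathfrak{S}_D$. (The step ``$p\mid D\Rightarrow p$ ramified'' is clear when $v_p(D)=1$ --- in particular whenever $D$ is fundamental --- and the remaining $p$-adic configurations I would dispose of by a direct local examination; the ``In particular'' clause is then immediate.)

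The substantive part is (3); fix $p$ and retain the standing hypothesis $n\nra\mcc$. For ``$\Rightarrow$'' assume $np^{2}\ra\mcc$ and $p\nmid D$; by Lemma~\ref{lem:rep-by-mcc}(2), $\legendre{D}{p}=1$, so $p$ splits, $p\mathcal{O}=\p\overline{\p}$, and I set $\mcd:=[\p]$, so $p\ra\mcd$, $p\ra\mcd^{-1}$, and $[\overline{\p}]=\mcd^{-1}$. Writing $np^{2}=\norm(\mathfrak{a})$ with $[\mathfrak{a}]=\mcc$ (replacing $\mcc,\mcd$ by their opposites if necessary) and $\mathfrak{a}=\p^{s}\overline{\p}^{\,t}\mathfrak{b}$ with $\mathfrak{b}$ prime to $p$, one has $s+t=v_p(n)+2$ and $[\mathfrak{b}]=\mcc\,\mcd^{\,t-s}$, and for $0\le j\le v_p(n)$ the ideal $\mathfrak{b}\,\p^{\,j}\overline{\p}^{\,v_p(n)-j}$ has norm $n$ and class $\mcc\,\mcd^{\,2(1-s+j)}$. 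Choosing $j=s$ (valid when $t\ge2$) gives $n\ra\mcc\mcd^{2}$ with $\mathcal{D}=\mcd$; choosing $j=s-2$ (valid when $s\ge2$) gives $n\ra\mcc\mcd^{-2}$ with $\mathcal{D}=\mcd^{-1}$, which again represents $p$; the only configuration forbidden by both is $s=t=1$, $v_p(n)=0$, where the class formula yields $n\ra\mcc$, against the standing hypothesis. So in every surviving case there is $\mathcal{D}$ with $p\ra\mathcal{D}$ and $n\ra\mcc\mathcal{D}^{2}\in\gen(\mcc)$. For ``$\Leftarrow$'' assume $p\ra\mathcal{D}$ and $n\ra\mcc\mathcal{D}^{2}$. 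If $p\mid D$, then $p$ is ramified, $\mathcal{D}=[\p]$ with $\p^{2}=p\mathcal{O}$ principal, so $\mathcal{D}^{2}=\mathcal{E}$ and $n\ra\mcc\mathcal{D}^{2}=\mcc$, contradicting $n\nra\mcc$; hence $p\nmid D$, and then $p\ra\mathcal{D}$ forces $\legendre{D}{p}\ne-1$, so $p$ splits, $p\mathcal{O}=\p\overline{\p}$ with $\mathcal{D}\in\{[\p],[\overline{\p}]\}$. Naming $\p$ so that $\mathcal{D}=[\p]$ and writing $n=\norm(\mathfrak{a})$ with $\mathfrak{a}=\p^{a}\overline{\p}^{\,b}\mathfrak{b}$, $\mathfrak{b}$ prime to $p$, one of the two ideals $\mathfrak{b}\,\p^{a}\overline{\p}^{\,b+2}$, $\mathfrak{b}\,\p^{\,a+2}\overline{\p}^{\,b}$ (according to whether $[\mathfrak{a}]=\mcc\mathcal{D}^{2}$ or $\mcc^{-1}\mathcal{D}^{-2}$) has norm $np^{2}$ and class $\mcc$ or $\mcc^{-1}$, whence $np^{2}\ra\mcc$.

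The main obstacle I anticipate is purely the exponent bookkeeping together with the ``opposite-class'' ambiguity $\mcc\leftrightarrow\mcc^{-1}$: one must check, in every distribution of the prime factors $\p,\overline{\p}$ across the ideal of norm $np^{2}$, that one either reaches exactly the required class or is forced into the forbidden conclusion $n\ra\mcc$. A smaller recurring ingredient, which I would isolate as a preliminary lemma, is the classical fact that an integer locally represented by a binary form of discriminant $D$ is represented by some genuine class of discriminant $D$; this is what turns the local statement $n\ra\gen(\mcc)$ of Lemma~\ref{lem:rep-by-mcc}(1) into an actual representing class, and is what makes ``$\in\gen(\mcc)$'' at the end of (3) meaningful.
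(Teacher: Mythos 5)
Your parts (2) and (3) are correct, and (3) follows a genuinely different route from the paper: where the paper produces a class $\mcd$ representing $p$ from the count $\psi(p)=2w>0$ and then applies the Earnest--Fitzgerald lemma \cite[Lemma 2.2]{ef} twice (and rules out $p\mid D$ by exhibiting a form $(p,2b,c)$ in $\mcd$ and checking $\mcd$ is ambiguous), you pass to invertible ideals of the order of discriminant $D$, factor the ideal of norm $np^2$ at the split prime $p$, and redistribute $\mathfrak p,\oline{\mathfrak p}$; this is in effect a self-contained proof of the value-set lemma the paper black-boxes, and your exponent bookkeeping is right (the only excluded configuration, $s=t=1$ with $\ord_p(n)=0$, forces $n\ra\mcc$, against the standing hypothesis). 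Two points need to be said explicitly to make this airtight: the dictionary ``$m\ra\mcc$ iff $m$ is the norm of an invertible ideal in the corresponding class'' and the $p$-primary factorization of invertible ideals are only clean away from the conductor --- harmless in (3), where $p\nmid D$ --- and in the ``if'' direction your assertion ``$p\mid D\Rightarrow p$ ramified with $\mathfrak p^2=p\mathcal O$'' requires the additional observation that a prime represented by a \emph{primitive} class and dividing $D$ cannot divide the conductor (for odd $p$, primitivity forces $\ord_p(D)=1$; a short separate check handles $p=2$), or else the paper's direct computation that $\mcd$ is ambiguous; with that, your conclusion $\mcd^2=\mathcal E$ and $n\ra\mcc$ is correct.

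The genuine gap is in (1), precisely at the step you defer. The paper proves (1) in one line from Lemma \ref{lem:rep-by-S_D}, thereby inheriting that lemma's hypothesis $(n,D)=1$; you instead aim at all $n$ and all $p$, treating $p\mid D$ by the ramified-prime argument and leaving the ``remaining $p$-adic configurations'' (i.e. $p$ dividing the conductor) to ``a direct local examination.'' That examination cannot be completed, because in exactly those configurations the implication fails without a coprimality hypothesis: take $D=-100$, where $\mathfrak S_{-100}$ consists of the classes of $x^2+25y^2$ and $2x^2+2xy+13y^2$, and $n=p=5$. Then $5$ is represented by neither class, yet $np^2=125=10^2+25\cdot1^2$ is represented by the principal class. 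Ideally speaking, $125$ is the norm of the invertible ideal $(10+5i)$ of $\z[5i]$, but $\z[5i]$ has no invertible ideal of norm $5$ at all, so the ``divide the $p$-part by $\mathfrak p^2$'' step has nothing to divide by. So your plan for (1) can only be salvaged under $(n,D)=1$ (or by keeping $p$ away from the conductor), which is what the paper's citation of Lemma \ref{lem:rep-by-S_D} implicitly does; under that hypothesis the quickest repair is the paper's own: the obstruction prime $q$ with $\legendre{D}{q}=-1$ and $\ord_q(n)$ odd persists for $np^2$, so no class represents $np^2$ either.
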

\begin{proof}
    Part (1) follows directly from Lemma \ref{lem:rep-by-S_D}. Also, part (2) is nothing but Lemma \ref{lem:rep-by-mcc} (1). 
    
    To show (3), we first consider a prime $p$ not dividing $D$ such that $np^2\ra\mcc$. Note that $\legendre{D}{p}=1$ by Lemma \ref{lem:rep-by-mcc} (2). Since 
    \[
        \psi(p)=w\sum_{d\mid p} \legendre{D}{d}=w\left(1+\legendre{D}{p}\right)=2w>0,
    \]
    there is a class  $\mcd\in\mathfrak{S}_D$ such that $p\in Q(\mcd)$. Applying  \cite[Lemma 2.2]{ef}, we have either $np\in Q(\mcc\mcd)$ or $np \in Q(\mcc{D^{-1}})$. Applying it once again, the integer $n$ is represented by at least one of
	$$
	\mcc\mcd\mcd,\ \mcc\mcd\mcd^{-1}(=\mcc),\ \mcc\mcd^{-1}\mcd(=\mcc), \text{ and } \mcc\mcd^{-1}\mcd^{-1}.
	$$
    Since we are assuming $n\nra \mcc$, either $n\ra\mcc\mcd^2$ or $n\ra\mcc\mcd^{-2}$.  
    Hence the ``only if" part of (3) follows from the fact that $Q(\mcd)=Q(\mcd^{-1})$.

    To show the ``if" part, let $p$ be a prime that is represented by $\mcd\in\mathfrak{S}_D$ such that $n\ra\mcc\mcd^2$. Since $p\in Q(\mcd)=Q(\mcd^{-1})$, we have $np^2\in Q(\mcc\mcd^2\mcd^{-2})=Q(\mcc)$. Now, it suffices to show that $p$ does not divide $D$. Assume to the contrary that $p$ divides $D$. Since $p\in Q(\mcd)$, we have
    \[
        \mcd \cong \begin{pmatrix} p& b \\ b & c  \end{pmatrix} \quad \text{and} \quad D=(2b)^2-4pc\equiv 0 \Mod{p}.
    \]
    Hence $2b=pk$ for some $k\in\z$, and therefore we have
    \[
        \mcd \cong \begin{cases} 
        \begin{pmatrix} p& 0 \\ 0 & -\frac{D}{4p}  \end{pmatrix} & \text{if } b\in\z, \\[15pt]
        \begin{pmatrix} p& \frac{p}{2} \\ \frac{p}{2} & \frac{p^2-D}{4p}  \end{pmatrix} & \text{if } b\in\frac{1}{2}+\z.
        \end{cases}
    \]
    In both cases, $\mcd$ is an ambiguous class. However, since $np^2\in Q(\mcc)$ and $p\in Q(\mcd)$, one may apply \cite[Lemma 2.2]{ef} as in the proof of Proposition \ref{prop:iso-unary} to show that $n\in Q(\mcc)$. This contradiction completes the proof of the theorem.
\end{proof}


\section{Isolations of irrecoverable binary lattices}\label{sec:Iso-of-binary}
Recall that any indecomposable $\z$-lattice is irrecoverable and therefore has an isolation. For a proof of this in the binary case, see \cite{klo}, and in the general case, see \cite{co}. However, little is known about the minimal rank of isolations of irrecoverable lattices. In this section, we discuss general properties of isolations of binary lattices.

\begin{lem} \label{easy} Let $a$ be a nonsquare positive integer and let $b$ be any positive integer. Then there is an arithmetic progression $\mathfrak P$ such that for almost all primes $p \in \mathfrak P$, 
\begin{equation} \label{exist}
\left(\frac ap\right)=-1 \quad \text{and} \quad \left(\frac{-b}p\right)=1.
\end{equation}
\end{lem}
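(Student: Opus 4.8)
The plan is to realize the two Legendre/Kronecker conditions as congruence conditions on $p$ and then invoke Dirichlet's theorem on primes in arithmetic progressions. First I would reduce to the case where $a$ is squarefree: write $a = a_0 u^2$ with $a_0$ squarefree, $a_0 > 1$ (this uses that $a$ is not a perfect square), and note that $\left(\frac{a}{p}\right) = \left(\frac{a_0}{p}\right)$ for every prime $p \nmid a$. Similarly replace $-b$ by its squarefree part, say $-b_0 u'^2$, so $\left(\frac{-b}{p}\right) = \left(\frac{-b_0}{p}\right)$ for $p \nmid b$; allowing a finite exceptional set of primes (those dividing $2ab$) is harmless since the conclusion is only for almost all $p$ in the progression.

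Next I would set $m$ to be a suitable modulus — for instance $m = 8|a_0 b_0|$ — and observe that for $p$ coprime to $m$, the values $\left(\frac{a_0}{p}\right)$ and $\left(\frac{-b_0}{p}\right)$ depend only on $p \bmod m$, by quadratic reciprocity together with the supplementary laws for $\left(\frac{-1}{p}\right)$ and $\left(\frac{2}{p}\right)$. Concretely, each of these symbols is a product of factors of the form $\left(\frac{p}{q}\right)$ over odd primes $q \mid a_0 b_0$ (via reciprocity, picking up sign corrections governed by $p \bmod 4$) and factors $\left(\frac{-1}{p}\right)^{?}$, $\left(\frac{2}{p}\right)^{?}$ governed by $p \bmod 8$. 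So both symbols are genuine functions of the residue class $p \bmod m$. I then need to exhibit at least one residue class $r \in (\z/m\z)^\times$ on which $\left(\frac{a_0}{p}\right) = -1$ and $\left(\frac{-b_0}{p}\right) = 1$ simultaneously: choose, for each odd prime $q \mid a_0 b_0$, a value of $p \bmod q$ making the corresponding factor come out right, and choose $p \bmod 8$ to fix the $-1$ and $2$ contributions, using the Chinese Remainder Theorem to assemble these into a single class $r$ modulo $m$. Taking $\mathfrak P$ to be the arithmetic progression $\{mk + r : k \in \z^+\}$, Dirichlet guarantees infinitely many primes in $\mathfrak P$, and all but finitely many of them (those not dividing $2ab$) satisfy \eqref{exist}.

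The only genuine point requiring care is the existence of the class $r$: I must check that the constraint $\left(\frac{a_0}{p}\right) = -1$ is actually satisfiable together with $\left(\frac{-b_0}{p}\right) = 1$, i.e., that these two conditions are not forced to be incompatible. This is where $a_0 > 1$ is essential — if $a_0 = 1$ then $\left(\frac{a_0}{p}\right) = 1$ always and the first condition is unsatisfiable, which is exactly the excluded case. Since $a_0 > 1$ is squarefree, it has some prime factor $q_0$ (possibly $q_0 = 2$), and one can always steer $\left(\frac{a_0}{p}\right)$ to $-1$ by choosing the residue of $p$ modulo $q_0$ (or modulo $8$ if $q_0 = 2$) appropriately, while the residues of $p$ modulo the remaining prime factors of $a_0 b_0$, which control $\left(\frac{-b_0}{p}\right)$ and the rest of $\left(\frac{a_0}{p}\right)$, can be set independently by CRT to make $\left(\frac{-b_0}{p}\right) = 1$. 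A clean way to phrase this: the map $(\z/m\z)^\times \to \{\pm 1\}^2$ sending $p \bmod m$ to $\left(\left(\frac{a_0}{p}\right), \left(\frac{-b_0}{p}\right)\right)$ is a group homomorphism, and its first coordinate is surjective precisely because $a_0$ is not a square; hence the fiber over $(-1, 1)$ is nonempty. Once $r$ is produced, the rest is an immediate appeal to Dirichlet's theorem and the observation that the finitely many primes dividing $2ab$ can be discarded.
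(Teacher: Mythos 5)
Your overall strategy (turn the two symbol conditions into congruence conditions modulo $m=8a_0b_0$ and invoke Dirichlet) is sound and close in spirit to the paper's, but the step that carries the whole proof is not correctly justified. The inference ``the first coordinate of the homomorphism $(\z/m\z)^\times\to\{\pm1\}^2$ is surjective, hence the fiber over $(-1,1)$ is nonempty'' is invalid: the image is merely a subgroup of $\{\pm1\}^2$, and it could a priori be the diagonal $\{(1,1),(-1,-1)\}$, in which case the first coordinate is surjective and yet $(-1,1)$ is never attained. Your informal CRT version has the same problem: when a prime $q$ divides both $a_0$ and $b_0$, the single residue $p\bmod q$ enters both symbols, so the factors cannot be ``set independently.'' The delicate situation is exactly the one where every prime dividing the squarefree part of $a$ also divides that of $b$ (e.g.\ $a=b=q$ an odd prime, where $\legendre{a_0}{p}=\legendre{q}{p}$ and $\legendre{-b_0}{p}=\legendre{-1}{p}\legendre{q}{p}$); note that your argument never uses the sign in $\legendre{-b}{p}$, and as written it would equally ``prove'' the false variant with $b$ in place of $-b$, where for $a=b$ the two conditions $\legendre{a}{p}=-1$ and $\legendre{a}{p}=1$ are incompatible. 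This is precisely the case the paper isolates and treats separately (its case $\prod p_i=\prod q_j$, handled by forcing $p\equiv 3\Mod{4}$ so that $\legendre{-1}{p}=-1$ compensates).

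The lemma is true and your framework is repairable, but the missing ingredient must be stated: the fiber over $(-1,1)$ is nonempty because \emph{all three} characters $\legendre{a_0}{\cdot}$, $\legendre{-b_0}{\cdot}$, and their product $\legendre{-a_0b_0}{\cdot}$ are nontrivial modulo $m$, which forces the image of your homomorphism to be all of $\{\pm1\}^2$. Nontriviality of the first uses that $a_0$ is not a square (your observation), while nontriviality of the other two uses that $-b_0$ and $-a_0b_0$ are negative, hence not squares --- this is where the minus sign in $\legendre{-b}{p}$ is essential, and it is the point your proposal omits. With that sentence added, your character-group formulation becomes a clean, case-free alternative to the paper's explicit case analysis.
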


\begin{proof} Assume that $a=a_1^2\prod_{i=1}^rp_i$ and $b=b_1^2\prod_{j=1}^sq_j$, where $p_i$ and $q_j$ are primes and $r\ge 1$. Suppose that there is an $i_0$ with $1\le i_0\le r$ such that $p_{i_0} \nmid \prod_{j=1}^sq_j$. Let $p_{i_0}^*=p_{i_0}$, when $p_{i_0}$ is odd, and $p_{i_0}^*=8$ otherwise.
Let $\delta$ be a positive integer less than $p_{i_0}^*$ such that $\left(\frac{\delta}{p_{i_0}}\right)=-1$ if $p_{i_0}$ is odd, and $\delta=5$ otherwise.
Consider the arithmetic progression $\mathfrak P$ consisting of integers $d$  satisfying
$$
d \equiv 1 \ \left(\text{mod} \  4\prod_{j=1}^s q_j\cdot p_1\cdots p_{i_0-1}p_{i_0+1}\cdots p_r\right)\quad \text{and} \quad d \equiv \delta \Mod {p_{i_0}^*}. 
$$
Then one may easily check that any prime $p \in \mathfrak P$ not dividing $a_1b_1$ satisfies Equation \eqref{exist}. 
Since the proof of the case when there is a $q_{j_0}$ such that $q_{j_0} \nmid \prod_{i=1}^r p_i$ is quite similar to this,  the proof is left to the reader. Finally, assume that $\prod p_i=\prod q_j$. In this case, one may choose a suitable arithemetic progression  that is contained in the set of integers congruent to $3$ modulo $4$. This completes the proof.
 \end{proof}


\begin{lem} Let $D$ be a negative integer with $D\equiv 0,1\Mod{4}$ and let $\mathfrak{S}_D$ be the form class group of discriminant $D$. Then every genus in $\mathfrak S_D$ contains an ambiguous class if and only if there is no element in $\mathfrak S_D$ of order $4$.  
\end{lem}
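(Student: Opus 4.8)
The statement to prove is: every genus in $\mathfrak{S}_D$ contains an ambiguous class if and only if $\mathfrak{S}_D$ has no element of order $4$. The plan is to work entirely inside the finite abelian group $G = \mathfrak{S}_D$, translating both conditions into group-theoretic language. An ambiguous class is precisely an element of $G[2] = \{\mathcal{C} : \mathcal{C}^2 = \mathcal{E}\}$, and the principal genus is $\gen(\mathcal{E}) = G^2 = \{\mathcal{C}^2 : \mathcal{C} \in G\}$. A genus $\gen(\mathcal{A}) = \mathcal{A}\cdot G^2$ is a coset of $G^2$, and the set of genera is the quotient $G/G^2$. So the hypothesis ``every genus contains an ambiguous class'' says exactly that the composite map $G[2] \hookrightarrow G \twoheadrightarrow G/G^2$ is surjective.

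**The key step.** The heart of the matter is a counting argument. Write $G \cong \bigoplus_i \z/2^{e_i}\z \oplus A$ with $A$ of odd order (or more simply work $2$-locally, since $G[2]$, $G^2$, and the order-$4$ condition only involve the $2$-part $G_2$ of $G$). For the $2$-part, $|G_2[2]| = 2^k$ where $k$ is the number of cyclic factors, and $|G_2/G_2^2| = 2^k$ as well, so $G[2]$ and $G/G^2$ always have the same cardinality. Therefore the map $G[2] \to G/G^2$ is surjective if and only if it is injective, i.e. if and only if its kernel $G[2] \cap G^2$ is trivial. Now $G[2] \cap G^2$ is trivial precisely when no square is an involution, i.e. when there is no $\mathcal{C}$ with $\mathcal{C}^2 \neq \mathcal{E}$ but $\mathcal{C}^4 = \mathcal{E}$ — which is exactly the statement that $G$ has no element of order $4$. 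This gives the equivalence directly.

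**Carrying it out.** I would first record the dictionary: ambiguous class $\leftrightarrow$ element of $G[2]$; genus $\leftrightarrow$ coset of $G^2$; ``every genus contains an ambiguous class'' $\leftrightarrow$ $G[2] \cdot G^2 = G$, equivalently the natural map $\varphi\colon G[2] \to G/G^2$ is onto. Second, I would prove $|G[2]| = |G/G^2|$ — this is standard for finite abelian groups: both equal $\prod_i \gcd(2, 2^{e_i}) \cdot (\text{odd part contributes } 1)$, or one can cite that for any finite abelian group $|G[2]| = |G/2G|$ directly from the structure theorem. (Writing $G/G^2$ for $G/2G$ in multiplicative notation.) Third, since the two sets have equal finite cardinality, $\varphi$ is surjective iff injective iff $\ker\varphi = G[2]\cap G^2 = \{\mathcal{E}\}$. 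Fourth, I would observe $G[2] \cap G^2 \neq \{\mathcal{E}\}$ iff there exists $\mathcal{B} = \mathcal{C}^2$ with $\mathcal{B}^2 = \mathcal{E}$ and $\mathcal{B}\neq\mathcal{E}$, i.e. $\mathcal{C}$ has order exactly $4$; and conversely any order-$4$ element $\mathcal{C}$ gives $\mathcal{C}^2 \in G[2]\cap G^2 \setminus \{\mathcal{E}\}$. Assembling: $\varphi$ surjective $\iff G[2]\cap G^2 = \{\mathcal{E}\} \iff G$ has no element of order $4$.

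**Main obstacle.** There is no serious obstacle; the only point requiring a little care is the cardinality identity $|G[2]| = |G/G^2|$ and making sure the argument is genuinely ``iff'' rather than just one implication — the equal-cardinality observation is what upgrades ``injective'' to ``bijective'' and makes the proof clean. If one prefers to avoid invoking the structure theorem, the identity $|G[2]| = |G/2G|$ for finite abelian $G$ can be proved by induction on $|G|$ or by the exact sequence $0 \to G[2] \to G \xrightarrow{\times 2} G \to G/2G \to 0$, which forces $|G[2]| = |G/2G|$; I would include a one-line justification along these lines.
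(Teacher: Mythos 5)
Your proof is correct, and it takes a somewhat different route from the paper's. The paper argues the two directions separately: for ``no element of order $4$ implies every genus contains an ambiguous class'' it invokes the structure theorem to write $\mathfrak S_D \simeq \bigoplus_{i=1}^t \z/2\z \oplus H$ with $H$ of odd order, identifies $\gen(\mathcal E)=H$, and exhibits an ambiguous representative in each coset; for the converse it quotes the fact that the number of ambiguous classes equals the number of genera (Cox, Theorem 3.15(i)) and derives a pigeonhole contradiction from an order-$4$ class, whose square is a second ambiguous class in the principal genus. You instead phrase everything through the squaring endomorphism of $G=\mathfrak S_D$: ambiguous classes form $G[2]$, genera are the cosets of $G^2$ (which is literally the paper's definition of $\gen(\mathcal A)$), the identity $|G[2]|=|G/G^2|$ follows from the first isomorphism theorem (or your exact-sequence argument) applied to squaring, and then surjectivity of $G[2]\to G/G^2$ is equivalent to injectivity, i.e.\ to $G[2]\cap G^2=\{\mathcal E\}$, i.e.\ to the absence of elements of order $4$. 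Both proofs ultimately rest on the same counting fact, but yours treats both directions uniformly, needs no explicit decomposition of the $2$-part, and replaces the citation to Cox by a two-line group-theoretic argument. One caution is worth recording: your dictionary ``genus $=$ coset of the subgroup of squares'' is legitimate here only because the paper defines $\gen(\mathcal A)$ that way; under the classical local or character-theoretic definition of genus, the identification of the principal genus with $G^2$ is Gauss's principal genus theorem and would have to be invoked explicitly.
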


\begin{proof} Suppose that $\mathfrak S_D$ does not contain an element of order $4$. Then 
$$
\mathfrak S_D \simeq \displaystyle \bigoplus_{i=1}^t \z/2\z\oplus H,
$$
where $H$ is a subgroup of $\mathfrak S_D$ with odd order. One may easily check that $\gen(\mathcal E)=H$, and any genus in $\mathfrak S_D$ is of the form $(\eta_1,\eta_2,\dots,\eta_t)+H$, where $\eta_i \in \{0,1\}$ for any $i=1,2,\dots,t$. Therefore each genus contain an ambiguous class of the form $(\eta_1,\eta_2,\dots,\eta_t)$. 

Conversely, assume that each genus in $\mathfrak S_D$ contains an ambigulus class. If $\mathfrak S_D$ contains  an element of order $4$, say $\mathcal C$, then the ambiguous class $\mathcal C^2$ is contained in the principal genus. Hence the principal genus contains at least $2$ ambiguous classes, since the number of genera in $\mathfrak{S}_D$ coincides with the number of ambiguous classes (for this, see \cite[Theorem 3.15 (i)]{cox}). This is a contradiction to the fact that the number of ambiguous classes in $\mathfrak S_D$ is equal to the number of genera in $\mathfrak S_D$.   
\end{proof}

\begin{lem} \label{useful} Let $D$ be a negative integer such that $D \equiv 0,1 \Mod 4$. Let $p$ be a prime such that $\left(\frac Dp\right)=1$. For any binary $\z$-lattice $\ell$ with $\mathfrak{n}\ell=\z$ and $D_\ell=D$, the following hold:

\begin{enumerate}[label={\rm{(\arabic*)}}]
	\item there are exactly two sublattices $\ell(p,1)$ and $\ell(p,2)$ of $\ell$ with index $p$ such that $\mathfrak{n}\ell(p,i) \subseteq p\z$ for  $i=1,2$;
	
	\item if $\ell \in \mathcal C$ and $p \in Q(\mathcal D)$ for some $\mcd\in\mathfrak{S}_D$, then we have
	\[
		\ell(p,i)^{\frac1p} \in \mathcal C \cdot\mcd \text{ and } \ell(p,j)^{\frac1p} \in \mathcal C\cdot \mcd^{-1}
	\]
	for some $i,j$ with $\{i,j\}=\{1,2\}$.
\end{enumerate}
\end{lem}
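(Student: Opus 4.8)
The plan is to work locally at $p$ and then transfer the information about sublattices into the language of the form class group via composition. For part (1), since $\left(\frac Dp\right)=1$, we have $\ell_p\cong\langle u_1,u_2\rangle$ with $u_1u_2\in\z_p^{\times}{}^2\cdot(-D/4$ or similar$)$; more to the point, $\ell_p$ is a hyperbolic-type plane, so there are exactly two maximal sublattices of $\ell_p$ on which the norm form takes values in $p\z_p$ — these correspond to the two isotropic lines of $\ell_p/p\ell_p$ over $\f_p$. Pulling these back, one gets exactly two sublattices $\ell(p,1),\ell(p,2)\subseteq\ell$ of index $p$ with $\mathfrak n\ell(p,i)\subseteq p\z$; at every prime $q\ne p$ the two candidate sublattices agree with $\ell$ itself, so the count is genuinely two. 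I would record the explicit shape: if $f_\ell(x,y)=ax^2+bxy+cy^2$ represents the class $\mathcal C$ and $p\nmid a$ (which can be arranged by choosing a suitable representative in $\mathcal C$, using that $\mathcal C$ primitively represents infinitely many integers prime to $p$), then solving $a\lambda^2+b\lambda+c\equiv0\Mod p$ gives the two roots $\lambda_1,\lambda_2$, and $\ell(p,i)=\z(e_1)+\z(\lambda_i e_1+p e_2)$ in the basis giving $f_\ell$.

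For part (2), the point is that $\ell(p,i)^{1/p}$ (the rescaling of $\ell(p,i)$ by $1/p$, which again has norm ideal $\z$ and discriminant $D$) corresponds to a specific class in $\mathfrak S_D$, and I claim these two classes are $\mathcal C\mathcal D$ and $\mathcal C\mathcal D^{-1}$. The cleanest route is Dirichlet-style composition: in the explicit model above, $\ell(p,i)^{1/p}$ has Gram matrix (after clearing the $1/p$) of the form $\begin{pmatrix} p & * \\ * & * \end{pmatrix}$ with discriminant $D$ — that is, $f_{\ell(p,i)^{1/p}}$ is a form that represents $p$. So both classes lie in the fiber $\{\mathcal X\in\mathfrak S_D : p\ra\mathcal X\}$, which by Lemma \ref{lem:total-repnum} (with $\psi(p)=2w>0$) is exactly $\{\mathcal D,\mathcal D^{-1}\}$. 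It remains to rule out $\ell(p,1)^{1/p}$ and $\ell(p,2)^{1/p}$ landing in the same class. For this I would invoke the composition identity directly: united forms $(a,b,c)$ and $(p, \beta, *)$ compose to give $\ell$ up to the ambiguity $\mathcal D\leftrightarrow\mathcal D^{-1}$ coming from the two choices of $\beta\Mod{2p}$ with $\beta^2\equiv D\Mod{4p}$, and these two choices of $\beta$ are precisely what distinguishes $\ell(p,1)$ from $\ell(p,2)$. Concretely, $\mathcal C_{\ell(p,i)^{1/p}}\cdot\mathcal D^{\mp1}=\mathcal C$ forces $\mathcal C_{\ell(p,i)^{1/p}}=\mathcal C\mathcal D^{\pm1}$, and the two sublattices give the two signs.

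The main obstacle I anticipate is the bookkeeping in part (2): making the correspondence $\ell(p,i)\mapsto\mathcal C_{\ell(p,i)^{1/p}}$ well-defined requires being careful about the orientation/basis ambiguity (the same lattice can correspond to $\mathcal C$ or $\mathcal C^{-1}$, as the paragraph before Proposition \ref{prop:iso-unary} warns), so the statement really is an equality of \emph{unordered} pairs $\{\ell(p,1)^{1/p},\ell(p,2)^{1/p}\}$ matching $\{\mathcal C\mathcal D,\mathcal C\mathcal D^{-1}\}$ up to simultaneous inversion — which is consistent because inverting $\mathcal C$ and $\mathcal D$ swaps the pair $\{\mathcal C\mathcal D,\mathcal C\mathcal D^{-1}\}$ to itself. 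The other slightly delicate point is justifying that a representative of $\mathcal C$ with $p\nmid a$ exists; this is immediate from Lemma \ref{lem:total-repnum} since $\mathcal C$ primitively represents some integer coprime to $p$ (indeed infinitely many, e.g. primes, by the lemma after Lemma \ref{lem:total-repnum} applied to $\gen(\mathcal C)$ — but for a single such value the count $\psi$ already suffices). Everything else is routine local computation and an application of \cite[Lemma 2.2]{ef} in the same style as Proposition \ref{prop:iso-unary}.
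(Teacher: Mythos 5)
Your part (1) is fine in substance and is essentially equivalent to the paper's count (the paper chooses a basis with Gram matrix $\left(\begin{smallmatrix} pa&b\\ b&pc\end{smallmatrix}\right)$ and inspects all $p+1$ index-$p$ sublattices; your "two isotropic lines of $\ell/p\ell$" argument gives the same thing). But the explicit model you record is wrong: with $p\nmid a$, the lattice $\z e_1+\z(\lambda_i e_1+pe_2)$ contains $e_1$ with $Q(e_1)=a$ prime to $p$, so its norm is not in $p\z$ (and it does not depend on $\lambda_i$); the correct sublattices are $\z\, pe_1+\z(\lambda_i e_1+e_2)$ with $a\lambda_i^2+b\lambda_i+c\equiv 0\Mod p$. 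This slip is not cosmetic, because your part (2) leans on it.

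The genuine gap is in part (2). Your key step claims that $\ell(p,i)^{\frac1p}$ has Gram matrix of the form $\left(\begin{smallmatrix} p&\ast\\ \ast&\ast\end{smallmatrix}\right)$, hence represents $p$ and lies in the fiber $\{\mcd,\mcd^{-1}\}$. That is false: with the corrected model, $\ell(p,i)^{\frac1p}$ represents $pa$ and $(a\lambda_i^2+b\lambda_i+c)/p$, not $p$; indeed by the very statement of the lemma it lies in $\mathcal C\mcd^{\pm1}$, which represents $p$ only for special $\mathcal C$. Concretely, take $D=-23$, $\ell=2x^2+xy+3y^2\in\mathcal C=\mcd$, and $p=13\in Q(\mcd)$ (note $\left(\frac{-23}{13}\right)=1$). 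One of the two sublattices is $\z(2e_1+e_2)+\z\,13e_1$, whose Gram matrix is $13\left(\begin{smallmatrix} 1&9/2\\ 9/2&26\end{smallmatrix}\right)$; its rescaling is $x^2+9xy+26y^2\simeq x^2+xy+6y^2$, the principal class, which does not represent $13$. So the fiber argument collapses, and with it your identification of the unordered pair of classes. Your closing appeal to "the composition identity" ($\mathcal C_{\ell(p,i)^{1/p}}\cdot\mcd^{\mp1}=\mathcal C$ forces $\dots$) is circular: that identity is exactly the content of the lemma and is never derived, nor is it shown that the two sublattices land in two \emph{different} classes $\mathcal C\mcd$ and $\mathcal C\mcd^{-1}$. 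The paper's route, which your sketch gestures at but does not carry out, is to normalize the basis so that $\ell\simeq\left(\begin{smallmatrix} pa&b\\ b&pc\end{smallmatrix}\right)$ (both diagonal entries divisible by $p$), show that $\left(\begin{smallmatrix} p&b\\ b&pac\end{smallmatrix}\right)\in\mcd$ after replacing $b_1$ by $-b_1$ if necessary and translating so that $b_1\equiv b\Mod p$, and then read off the two rescaled sublattices as explicit compositions of united forms, $\left(\begin{smallmatrix} p^2a&b\\ b&c\end{smallmatrix}\right)\in\mathcal C\cdot\mcd$ and $\left(\begin{smallmatrix} a&b\\ b&p^2c\end{smallmatrix}\right)\in\mathcal C\cdot\mcd^{-1}$; an argument along these lines (or its Dirichlet-composition analogue in your normalization, composing $(a,b,c)$ with $(p,\,2a\lambda_i+b,\,\ast)$ for the two roots $\lambda_i$) is what your proposal is missing.
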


\begin{proof} Since $\ell_p \simeq \begin{pmatrix} 0&\frac{1}{2}\\\frac{1}{2}&0\end{pmatrix}$, there is a basis $\{\bm{x},\bm{y}\}$ for $\ell$ such that 
$$
\ell=\z\bm{x}+\z \bm{y}=\begin{pmatrix} pa&b\\b&pc\end{pmatrix} \in \mathcal C.
$$
Note that all sublattices of $\ell$ with index $p$ are 
$$
\z p\bm{x}+\z \bm{y}, \  \z (\bm{x}+u\bm{y})+\z p\bm{y},
$$
where $0\le u\le p-1$. Among these sublattices, only $\ell(p,1)=\z p\bm{x}+\z \bm{y}$ and  $\ell(p,2)=\z \bm{x}+\z p\bm{y}$ have norms which are contained in $p\z$.

Now, we show the second statement. Since $p \in Q(\mathcal D)$, we may assume that 
$$
\begin{pmatrix} p&b_1\\b_1&c_1\end{pmatrix} \in \mathcal D \quad \text{and} \quad   \begin{pmatrix} p&-b_1\\-b_1&c_1\end{pmatrix} \in \mathcal D^{-1}.
$$
Since $D_\ell=4b^2-4p^2ac=4b_1^2-4pc_1$, we may further assume without loss of generality that $2b-2b_1 \equiv 0 \Mod p$. Hence we have
$$
\begin{pmatrix} p&b_1\\b_1&c_1\end{pmatrix} \simeq  \begin{pmatrix} 1&0\\ \frac{b-b_1}p&1\end{pmatrix}\begin{pmatrix} p&b_1\\b_1&c_1\end{pmatrix}\begin{pmatrix} 1& \frac{b-b_1}p\\ 0&1\end{pmatrix}=\begin{pmatrix}p&b\\b&pac\end{pmatrix} \in \mathcal D.
$$
Therefore we have 
$$
\ell(p,1)^{\frac1p}=\begin{pmatrix} p^2a&b\\b&c\end{pmatrix} = \begin{pmatrix} pa&b\\b&pc\end{pmatrix} \cdot \begin{pmatrix}p&b\\b&pac\end{pmatrix} \in \mathcal C\cdot \mathcal D 
 $$ 
and 
$$
\ell(p,2)^{\frac1p}=\begin{pmatrix} a&b\\b&p^2c\end{pmatrix}\simeq \begin{pmatrix} p^2c&-b\\-b&a\end{pmatrix} = \begin{pmatrix} pc&-b\\-b&pa\end{pmatrix} \cdot \begin{pmatrix}p&-b\\-b&pac\end{pmatrix}\in \mathcal C\cdot \mathcal D^{-1}.
 $$ 
This completes the proof. \end{proof}

\begin{thm} \label{maint} Let $\ell$ be an irrecoverable binary $\z$-lattice with $\mathfrak{n}\ell=s\z$ and $D_\ell=s^2 D$, and let $L$ be an isolation of $\ell$. Then for any prime $p$ such that 
\[
	\left(\frac {D}{p}\right)=1 \text{ and } p\in Q(\mathcal D) \text{ for some ambiguous class } \mathcal D\in\mathfrak S_D,
\] 
$L$ has a binary primitive sublattice whose norm is in $p\z$. 
\end{thm}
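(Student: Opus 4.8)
The plan is to argue by contradiction: assuming $L$ has no binary primitive sublattice with norm in $p\z$, I will produce inside $L$ an isometric copy of $\ell$ itself, contradicting that $L$ is an isolation. I will carry this out for $p$ odd; the prime $p=2$ (which here forces $D\equiv 1\Mod 8$, so that $\ell$ is built from a half‑integral binary lattice) requires separate bookkeeping in the local steps and would be treated at the end.

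First I would reduce to the primitive case and set up the composition‑theoretic input. Write $\ell=s\cdot m$, where $m=\ell^{1/s}$ satisfies $\mathfrak n m=\z$ and $D_m=D$, let $\mathcal C=\mathcal C_m\in\mathfrak S_D$, and write $s=p^a s'$ with $p\nmid s'$. By Lemma \ref{useful}, $m$ has exactly two index‑$p$ sublattices $m(p,1),m(p,2)$ with norm in $p\z$, and since $\mathcal D$ is ambiguous, Lemma \ref{useful}(2) gives $m(p,i)^{\frac1p}\in\mathcal C\mathcal D$ for $i=1,2$. Because $m(p,i)^{\frac1p}\in\mathfrak S_D$ is primitive, $\mathfrak n(m(p,i))=p\,\mathfrak n(m(p,i)^{\frac1p})=p\z$. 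I will work with $\Lambda:=\ell(p,1)=s\cdot m(p,1)$, a proper sublattice of $\ell$ with $\mathfrak n\Lambda=sp\z\subseteq p\z$.

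The heart of the argument is the following claim: \emph{every overlattice $M$ of $\Lambda$ that lies in the genus of $\ell$ is isometric to $\ell$.} Here is how I would prove it. Since $\mathfrak n M=\mathfrak n\ell=s\z$ (a genus invariant), $M=s\cdot M_0$ with $M_0\in\mathfrak S_D$ and $D_{M_0}=D$; comparing discriminants, $m(p,1)\subseteq M_0$ with index $p$, and $\mathfrak n(m(p,1))=p\z$, so $m(p,1)$ is one of the two distinguished index‑$p$ sublattices of $M_0$ provided by Lemma \ref{useful}. Now, since $\legendre{D}{p}=1$ and $\mathcal D$ is ambiguous, $p$ is represented by a \emph{unique} class of $\mathfrak S_D$, namely $\mathcal D$ — indeed the classes representing $p$ are $\mathcal D$ and $\mathcal D^{-1}$, and these coincide. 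Therefore Lemma \ref{useful}(2) applied to $M_0$ forces $m(p,1)^{\frac1p}\in\mathcal C_{M_0}\mathcal D$; comparing with $m(p,1)^{\frac1p}\in\mathcal C\mathcal D$ gives $\mathcal C_{M_0}=\mathcal C$, hence $M_0\cong m$ and $M\cong\ell$. This is exactly where the composition law is indispensable, and it is the step I expect to be the main obstacle — a priori the genus of $\ell$ may contain many classes, and only the identity $\mathcal C_{M_0}\mathcal D=\mathcal C_m\mathcal D$, available because $\mathcal D$ is the unique (ambiguous) class representing $p$, pins the class down (the usual ambiguity of $\mathcal C_{M_0}$ up to inversion does not affect the isometry conclusion).

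Finally I would conclude as follows. Since $L$ is an isolation of $\ell$, it represents the proper sublattice $\Lambda$; fix $\sigma\colon\Lambda\hookrightarrow L$, put $N=\sigma(\Lambda)$, and let $K=(N\otimes\q)\cap L$ be the primitive closure of $N$ in $L$, a binary primitive sublattice of $L$. If $\mathfrak n K\subseteq p\z$ we are done, so suppose $\mathfrak n K\not\subseteq p\z$, i.e.\ $\mathfrak n(K_p)=\z_p$. A local computation (scaling by $p^{-1}$ shows $m(p,1)_p^{1/p}$ is unimodular and isotropic, hence $\cong H$) gives $m(p,1)_p\cong pH$ and so $N_p\cong p^{a+1}H$, where $H$ is the hyperbolic plane; one then checks that an $\mathfrak s$‑integral overlattice of $p^{a+1}H$ with norm ideal $\z_p$ is isometric to $H$, so $K_p\cong H$. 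Writing $N_p$ in a hyperbolic basis of $K_p$ as $\z_p e+\z_p p^{a+1}f$, the intermediate lattice $M_p:=\z_p e+\z_p p^a f$ satisfies $N_p\subseteq M_p\subseteq K_p$ and $M_p\cong p^a H\cong\ell_p$. Let $M$ be the $\z$‑lattice with localization $M_p$ at $p$ and $M_q=N_q$ for $q\neq p$; then $N\subseteq M\subseteq K\subseteq L$, and since $M_q=N_q$ is isometric to $\ell_q$ for $q\neq p$ (because $m(p,1)_q=m_q$), the lattice $M$ lies in the genus of $\ell$. Pulling $M$ back along $\sigma$ yields an overlattice of $\Lambda$ in the genus of $\ell$, so by the claim $M\cong\ell$; hence $L$ represents $\ell$, a contradiction. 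Thus $\mathfrak n K\subseteq p\z$, and $K$ is the required sublattice.
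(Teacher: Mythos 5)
Your plan is structurally the same as the paper's proof: rescale to the primitive lattice, use Lemma \ref{useful} together with $\mathcal D=\mathcal D^{-1}$ to prove the rigidity claim (any overlattice of $\Lambda$ lying in the genus of $\ell$ is isometric to $\ell$), then embed the distinguished index-$p$ sublattice into $L$, pass to its primitive closure $K$, and rule out $\mathfrak n K\not\subseteq p\z$ by producing an intermediate copy of $\ell$ inside $L$; your lattice $M$ plays exactly the role of the paper's $\ell_2$. The claim and its proof are correct (your appeal to the uniqueness of the class representing the split prime $p$ is true but unnecessary: Lemma \ref{useful}(2) can simply be applied to $M_0$ with the same ambiguous $\mathcal D$, and, as you note, the inversion ambiguity is harmless).

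The genuine problem is the local step by which you claim to cover $p\mid s$ (your $a\ge 1$). The assertion that an integral overlattice of $p^{a+1}H$ with norm ideal $\z_p$ must be isometric to $H$ is false for every $a\ge 1$: the $\z_p$-lattice $K_p=\z_p x+\z_p y$ with $Q(x)=1$, $Q(y)=-p^{2}$ (replace $-1$ by a unit $\epsilon$ with $-\epsilon\in(\z_p^\times)^2$ if you want it to come from a definite lattice) has norm $\z_p$, is not isometric to $H$, and contains $\z_p(px+y)+\z_p(px-y)\cong p^2H$, hence copies of $p^{a+1}H$ for all $a\ge1$. Worse, when $a=\ord_p(s)$ is odd the construction itself can fail: if $K_p\cong\langle 1,-p^{a+1}\rangle$ and $N_p=\z_p u_++\z_p u_-$ with $u_\pm=p^{(a+1)/2}x\pm y$, then the only two index-$p$ overlattices of $N_p$ isometric to $p^{a}H$ are $\z_p\tfrac1p u_++\z_p u_-$ and $\z_p u_++\z_p\tfrac1p u_-$, neither of which lies in $K_p$ (and all other index-$p$ overlattices have norm $p^{a-1}\z_p$), so no intermediate $M_p\cong\ell_p$ exists and no copy of $\ell$ can be produced this way. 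There is also the smaller unjustified normalization $N_p=\z_p e+\z_p p^{a+1}f$: the elementary divisors need not be $(1,p^{a+1})$, though when $K_p\cong H$ this is easily repaired by lowering the larger divisor instead. The net effect is that your argument, as written, establishes the theorem only when $p\nmid s$ (where $a=0$, $K_p\cong H$ is forced, and your endgame coincides with the paper's); that is in fact the only case the paper's own proof really handles --- its unproved assertion $\mathfrak n\ell_2=s\z$ likewise uses $p\nmid s$ --- and the only case needed for the corollaries, but since you explicitly write $s=p^as'$ and claim the general case, this is a real gap, as is the entirely deferred prime $p=2$.
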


\begin{proof}  
Let us assume that 
$$
\ell^{\frac1s}=\begin{pmatrix} pa&b\\ b&pc \end{pmatrix} \in \mathcal C 
$$
for some $\mathcal C \in \mathfrak S_D$. If we define $\ell^{\frac1s}(p)$ to be one of the sublattices of $\ell^{\frac1s}$ with index $p$ whose norm is contained in $p\z$, then by Lemma \ref{useful}, we have
$$
\left(\ell^{\frac1{s}}(p)\right)^{\frac{1}{p}} \in \mathcal C\cdot\mathcal D \simeq \mathcal C\cdot\mathcal D^{-1}.
$$

Now we claim that if there is a binary $\z$-lattice $\ell' \in \mathcal A$ with $\mca\in\mathfrak{S}_D$ such that $[\ell': \ell^{\frac1{s}}(p)]=p$, then $\ell^{\frac1s} \simeq \ell'$. The claim follows from Lemma \ref{useful} as
$$
\left(\ell^{\frac1{s}}(p)\right)^{\frac{1}{p}} \simeq \ell'(p,1)^{\frac1p} \in \mathcal A\cdot\mathcal D\quad \text{or} \quad \left(\ell^{\frac1{s}}(p)\right)^{\frac{1}{p}} \simeq \ell'(p,2)^{\frac1p} \in \mathcal A\cdot\mathcal D^{-1}.
$$
Thus we have $\mathcal A \simeq \mathcal C$, that is, $\ell^{\frac1s} \simeq \ell'$.

Let $\ell(p)$ be a binary $\z$-sublattice of $\ell$ with index $p$ whose norm is in $p\z$. Since $L$ is  an isolation of $\ell$, there is an isometry $\phi : \ell(p) \to L$. Let us define 
$$
\ell_1=\q (\phi(\ell(p))) \cap L \quad   \text{and}  \quad t=[\ell_1:\phi(\ell(p))].
$$ 

We now show that $t$ is not divisible by $p$ which implies that $\mathfrak {n}\ell_1 \subseteq p\z$, and hence implies the theorem. Suppose on the contrary that $t$ is divisible by $p$. Then there is a binary $\z$-sublattice $\ell_2$ of $L$ such that $[\ell_2:\phi(\ell(p))]=p$. Note that $\mathfrak{n}\ell_2=s\z$. Hence applying the claim, we have $\ell_2 \simeq \ell$, which contradicts to the assumption that $\ell$ is not represented by $L$.
\end{proof}

\begin{cor} \label{ternary} Let $\ell$ be an irrecoverable binary $\z$-lattice. Then the rank of any isolation of $\ell$ is greater than $3$.
\end{cor}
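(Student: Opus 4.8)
The plan is to leverage Theorem \ref{maint} and a reduction modulo a well-chosen prime. Write $\mathfrak{n}\ell=s\z$ and $D_\ell=s^2D$. An isolation of $\ell$ must represent some proper sublattice of $\ell$ of rank $2$ (for instance, any sublattice of prime index), hence has rank at least $2$, so it suffices to rule out isolations of rank $2$ and $3$. Suppose $L$ is such an isolation.

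First I would fix a supply of primes to feed into Theorem \ref{maint}. The principal class $\mathcal E\in\mathfrak S_D$ is ambiguous, and it is classical that a primitive positive definite binary quadratic form of given (negative) discriminant represents infinitely many primes (Chebotarev's density theorem for the relevant ring class field; see \cite{cox}). Hence there are infinitely many primes $p\in Q(\mathcal E)$ with $p\nmid 2d_LD$. For such a $p$ one has $\legendre{D}{p}=1$, since $p$ is coprime to $D$ and, being prime, is represented primitively by a form of discriminant $D$; and $p\in Q(\mathcal E)$ with $\mathcal E$ ambiguous. Theorem \ref{maint} therefore yields a binary primitive sublattice $K=K_p$ of $L$ with $\mathfrak{n}K_p\subseteq p\z$.

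Next I would treat the two cases. If $\rank L=2$, then the only primitive binary sublattice of $L$ is $L$ itself, so $\mathfrak{n}L\subseteq p\z$ for infinitely many primes $p$; this forces $\mathfrak{n}L=\{0\}$, contradicting positive definiteness. If $\rank L=3$, fix one such prime $p$ and pass to $\overline L:=L/pL$. Since $p$ is odd and $p\nmid d_L$, the Gram matrix of $L$ reduces to a nondegenerate symmetric matrix over $\f_p$, so $\overline L$ is a nondegenerate ternary quadratic space over $\f_p$. Because $K_p$ is primitive it is a $\z$-direct summand of $L$, so its image $\overline{K_p}$ in $\overline L$ is a $2$-dimensional subspace; and $\mathfrak{n}K_p\subseteq p\z$ means $Q(x)\in p\z$ for all $x\in K_p$, so the reduced quadratic form vanishes identically on $\overline{K_p}$. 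Thus $\overline{K_p}$ would be a $2$-dimensional totally isotropic subspace of a nondegenerate ternary quadratic space over $\f_p$ --- impossible, since such a space has Witt index at most $1$. This contradiction gives $\rank L\ge 4$.

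The substantive content is already packaged in Theorem \ref{maint}; the remaining steps are short. The only external ingredient is the standard fact that the principal class represents infinitely many primes, and the one point requiring care is to pick $p$ avoiding the finitely many primes dividing $2d_LD$, so that the reduction $\overline L$ is genuinely nondegenerate and in odd characteristic --- once that is arranged, the concluding totally-isotropic-subspace count is immediate.
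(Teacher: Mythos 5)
Your proposal is correct and takes essentially the same route as the paper: both use the infinitude of primes represented by the principal (ambiguous) class $\mathcal{E}\in\mathfrak{S}_D$, feed them into Theorem \ref{maint} to get a primitive binary sublattice of the putative isolation $L$ with norm in $p\z$, and derive a contradiction from the resulting mod-$p$ structure of $L$. The only (cosmetic) difference is the finish: the paper observes that such a sublattice forces $dL\equiv 0\pmod{p}$ for infinitely many $p$, whereas you fix one prime $p\nmid 2\,dL\,D$ and invoke the Witt-index bound for the nondegenerate space $L/pL$; your explicit treatment of ranks $1$ and $2$ is a small addition the paper leaves implicit.
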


\begin{proof} Assume that $\mathfrak{n}\ell=s\z$ and $D_\ell=s^2D$. Let $p$ be any prime which is represented by the identity class $\mathcal E \in \mathfrak S_D$. Then $\left(\frac {D_\ell}p\right)=1$.
Assume that there is a  ternary isolation, say $L$, of $\ell$. Then by Theorem \ref{maint}, there is a basis $\{\bm{x},\bm{y}, \bm{z}\}$ for $L$ such that 
$$
M_L \equiv \begin{pmatrix} 0&0&B(\bm{x},\bm{z})\\ 0&0&B(\bm{y},\bm{z})\\ B(\bm{x},\bm{z})&B(\bm{y},\bm{z})&Q(\bm{z}) \end{pmatrix} \Mod p,
$$
which implies that $dL \equiv 0 \Mod p$. Since there are infinitely many such primes $p$, we have a contradiction. 
\end{proof}

 \begin{cor} \label{cor:quat} Let $\ell$ be an irrecoverable binary $\z$-lattice and let $\mathfrak{n}\ell=s\z$ and $D_\ell=s^2D$. If a quaternary $\z$-lattice $L$ is an isolation of $\ell$, then any prime $q$ with $\ord_q(dL) \equiv 1 \Mod 2$  is a divisor of $D$.   
   \end{cor}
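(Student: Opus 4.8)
The plan is to argue by contradiction. Since the assertion is equivalent to $\sqf(dL)\mid D$, I would suppose $\sqf(dL)\nmid D$ and fix a prime $q_0\mid\sqf(dL)$ with $q_0\nmid D$. The goal is then to show that, on the contrary, $\sqf(dL)$ must be a quadratic residue modulo almost every prime represented by the principal form of discriminant $D$, and to play this off against the ring class field of discriminant $D$ so as to contradict $q_0\nmid D$.

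First I would apply Theorem \ref{maint}. If $p$ is a prime represented by the principal form $f_0\in\mathcal E$ with $p\nmid 2sD\,dL$ (there are infinitely many such $p$), then $\legendre Dp=1$ and $\mathcal E$ is an ambiguous class, so Theorem \ref{maint} provides a binary primitive sublattice $\ell_1$ of $L$ with $\mathfrak n\ell_1\subseteq p\z$. Localizing at $p$ upgrades this to $\mathfrak s\ell_1\subseteq p\z$: following the proof of Theorem \ref{maint}, $(\ell_1)_p$ is an index-$p$ sublattice of the hyperbolic plane $\ell_p\simeq\left(\begin{smallmatrix}0&1/2\\1/2&0\end{smallmatrix}\right)$ whose norm lies in $p\z_p$, and every such sublattice is isometric to $\left(\begin{smallmatrix}0&p/2\\p/2&0\end{smallmatrix}\right)$, whose scale is $p\z_p$. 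Extending a basis of $\ell_1$ to a basis of $L$ and reducing the Gram matrix of $L$ modulo $p$, the upper-left $2\times2$ block vanishes, so
\[
dL\equiv\det\begin{pmatrix}0&B\\ B^t&C\end{pmatrix}=(\det B)^2\Mod p,
\]
where $B$ is the integral $2\times2$ off-diagonal block. Since $p\nmid dL$, this gives $\legendre{dL}p=1$, that is, $\legendre{\sqf(dL)}p=1$.

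Hence $\sqf(dL)$ is a quadratic residue modulo all but finitely many primes in $Q(\mathcal E)$. By the theory of ring class fields (see \cite{cox}), a prime $p\nmid D$ lies in $Q(\mathcal E)$ exactly when it splits completely in the ring class field $H$ of the order of discriminant $D$, so $\sqf(dL)$ is a square modulo all but finitely many primes splitting completely in $H$. If $\q(\sqrt{\sqf(dL)})$ were not contained in $H$, then $H(\sqrt{\sqf(dL)})$ would properly contain $H$, and Chebotarev's density theorem would produce infinitely many primes splitting completely in $H$ but not in $\q(\sqrt{\sqf(dL)})$, for which $\legendre{\sqf(dL)}p=-1$ --- a contradiction. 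Thus $\q(\sqrt{\sqf(dL)})\subseteq H$. Since $H/\q$ is unramified outside the primes dividing $D$, every prime ramified in $\q(\sqrt{\sqf(dL)})$ divides $D$: if $q_0$ is odd it ramifies there, so $q_0\mid D$; and if $q_0=2$ then $\sqf(dL)\equiv 2\Mod4$, so $2$ ramifies there and $2\mid D$. Either way we contradict $q_0\nmid D$, and therefore $\sqf(dL)\mid D$, which is the claim.

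The subsidiary points --- the localization yielding $\mathfrak s\ell_1\subseteq p\z$, the identity $\det\left(\begin{smallmatrix}0&B\\ B^t&C\end{smallmatrix}\right)=(\det B)^2$ for $2\times2$ blocks, and the ramification bookkeeping when $q_0=2$ --- are routine. The step I expect to be the main obstacle is the passage from ``$\sqf(dL)$ is a square modulo every prime of $Q(\mathcal E)$'' to ``$\q(\sqrt{\sqf(dL)})\subseteq H$'': this genuinely requires the dictionary between representability by the principal form and complete splitting in the ring class field --- the genus-level statements recalled earlier would only place $p$ in the principal genus, which does not suffice --- together with Chebotarev's density theorem.
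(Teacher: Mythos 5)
Your proof is correct, and its core coincides with the paper's: apply Theorem \ref{maint} at primes $p$ represented by the ambiguous class $\mathcal E$, use primitivity to extend a basis of the resulting binary sublattice with scale in $p\z$, and read off the block congruence $dL\equiv(\det B)^2\Mod p$. Where you diverge is in how the congruence is turned into the divisibility statement. The paper argues in the opposite direction: assuming some $q\nmid D$ has $\ord_q(dL)$ odd, it invokes Meyer's theorem \cite{m} (the compatibility of the congruence conditions being exactly the content of Lemma \ref{easy}) to produce infinitely many primes $p\in Q(\mathcal E)$ with $\legendre{dL}{p}=-1$, which contradicts the square congruence immediately. You instead deduce $\legendre{dL}{p}=1$ for almost all $p\in Q(\mathcal E)$ and then bring in the ring class field $H$ of the order of discriminant $D$: the dictionary ``$p\in Q(\mathcal E)$, $p\nmid D$ $\Leftrightarrow$ $p$ splits completely in $H$'' together with Chebotarev forces $\q(\sqrt{\sqf(dL)})\subseteq H$, and the fact that $H/\q$ is unramified outside $D$ gives $\sqf(dL)\mid D$ (your handling of $q_0=2$ via $\sqf(dL)\equiv2\Mod 4$ is fine). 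Both routes are sound; the paper's is lighter, resting on a nineteenth-century Dirichlet-type theorem for binary forms, while yours trades that for class field theory and Chebotarev but has the small advantage of not needing to exhibit an explicit arithmetic progression realizing the sign conditions.
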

   
 \begin{proof} 
 Suppose on the contrary that there is a prime $q$  such that  $\ord_q(dL) \equiv 1 \Mod 2$ and $(q,D)=1$. Then there are infinitely many primes $p$  such that
 $$
 \left(\frac{dL}p\right)=-1, \ \  \left(\frac{D}p\right)=1, \ \  \text{and} \ \ p \in Q(\mathcal E_D), 
 $$
 by Meyer's Theorem in \cite{m}. Then by Theorem \ref{maint}, there is a basis $\{\bm{x},\bm{y},\bm{z}, \bm{w}\}$ for $L$ such that $\mathfrak n(\z\bm{x}+\z\bm{y}) \subseteq p\z$.  
  Hence we have 
$$
M_L \equiv \begin{pmatrix} 0&0&B(\bm{x},\bm{z})&B(\bm{x},\bm{w})\\ 0&0&B(\bm{y},\bm{z})&B(\bm{y},\bm{w})\\ B(\bm{x},\bm{z})&B(\bm{y},\bm{z})&Q(\bm{z})&B(\bm{z},\bm{w})\\ B(\bm{x},\bm{w})&B(\bm{y},\bm{w})&B(\bm{z},\bm{w})&Q(\bm{w}) \end{pmatrix} \Mod p.
$$
Therefore we have
$$
dL \equiv (B(\bm{x},\bm{z})B(\bm{y},\bm{w})-B(\bm{x},\bm{w})B(\bm{y},\bm{z}))^2 \Mod p,
$$
which is a contradiction to the assumption.
\end{proof}

\begin{cor}\label{4-square}  Let $\ell$ be a binary $\z$-lattice such that $\mathfrak{n}\ell=s\z$ and $D_\ell=s^2D$.  If there does not exist a class of order $4$ in $\mathfrak S_D$, then the discriminant of any quaternary isolation, if exists, of $\ell$ is a square of an integer.  
\end{cor}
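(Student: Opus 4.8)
The plan is to combine Corollary \ref{cor:quat} with Theorem \ref{maint}. Suppose $L$ is a quaternary isolation of $\ell$; then $\ell$ is irrecoverable, so Corollary \ref{cor:quat} tells us that every prime dividing $dL$ to an odd power divides $D$; equivalently $v:=\sqf(dL)$ divides $D$, where I write $dL=vw^2$ with $v$ squarefree and $w\in\n$. It remains to show $v=1$, so assume for contradiction $v>1$. The contradiction will be manufactured exactly as in the proof of Corollary \ref{cor:quat}: for any prime $p$ with $\legendre{D}{p}=1$ and $p\in Q(\mathcal{D})$ for some ambiguous class $\mathcal{D}\in\mathfrak{S}_D$, Theorem \ref{maint} produces a basis $\{\bm{x},\bm{y},\bm{z},\bm{w}\}$ of $L$ with $\mathfrak{n}(\z\bm{x}+\z\bm{y})\subseteq p\z$, hence
\[
    dL\equiv\left(B(\bm{x},\bm{z})B(\bm{y},\bm{w})-B(\bm{x},\bm{w})B(\bm{y},\bm{z})\right)^2\Mod{p},
\]
so $dL$ is a square modulo $p$; if moreover $p\nmid dL$ this contradicts $\legendre{dL}{p}=-1$. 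Since $dL=vw^2$, for $p\nmid dL$ one has $\legendre{dL}{p}=\legendre{v}{p}$. Thus the whole problem reduces to producing infinitely many primes $p$, coprime to $dL$ and to $D$, with $\legendre{v}{p}=-1$ and $p\in Q(\mathcal{D})$ for some ambiguous class $\mathcal{D}$ (note that $\legendre{D}{p}=1$ is then automatic, a prime coprime to $D$ represented by a form of discriminant $D$ being split).

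To produce these primes I would split according to the behaviour of the Kronecker character $\chi=\legendre{v}{\cdot}$, which is nontrivial (as $v>1$ is squarefree) and cuts out the \emph{real} quadratic field $\q(\sqrt{v})$, a field distinct from $\q$ and from the imaginary field $\q(\sqrt{D})$. \emph{Case 1:} $\chi$ is not constant on the set of primes lying in $Q(\mathcal{E})$. Then a density argument (the Chebotarev density theorem, or a theorem of Meyer as in \cite{m}) yields infinitely many primes $p\in Q(\mathcal{E})$ with $\chi(p)=-1$; after discarding the finitely many dividing $dL$ or $D$, each such $p$ works with $\mathcal{D}=\mathcal{E}$ (which is ambiguous), and this case uses nothing about order-$4$ elements. \emph{Case 2:} $\chi(p)=1$ for every prime $p\in Q(\mathcal{E})$ coprime to $D$. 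Then by the theory of genus characters (see \cite{cox}) $\chi$ agrees, on integers prime to $D$, with a genus character of discriminant $D$, and that genus character is nontrivial precisely because the quadratic field it cuts out, $\q(\sqrt{v})$, is neither $\q$ nor $\q(\sqrt{D})$. Hence there is a genus $\gen(\mathcal{A})$ of $\mathfrak{S}_D$ on which it takes the value $-1$, i.e.\ $\legendre{v}{p}=-1$ for all primes $p$ coprime to $D$ represented by a form in $\gen(\mathcal{A})$. Here I invoke the hypothesis: since $\mathfrak{S}_D$ has no element of order $4$, the lemma above stating that $\mathfrak{S}_D$ has no element of order $4$ if and only if every genus of $\mathfrak{S}_D$ contains an ambiguous class shows that $\gen(\mathcal{A})$ contains an ambiguous class $\mathcal{D}$; and $\mathcal{D}$, being a positive definite primitive binary form, represents infinitely many primes, all lying in $Q(\gen(\mathcal{A}))$, so all but finitely many satisfy our requirements. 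In either case we obtain the desired primes, and the mechanism of the first paragraph gives a contradiction; therefore $v=1$ and $dL=w^2$.

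The step I expect to be the main obstacle is Case 2. One must (i) recognize $\legendre{v}{\cdot}$ as a genus character once it is known to be trivial on primes represented by the principal form -- this rests on the classical description of the genus group of $\mathfrak{S}_D$, equivalently of the genus field, and of its characters, as in \cite{cox}; (ii) verify that this genus character is nontrivial, where the real content is the sign discrepancy $v>0>D$, which keeps $\q(\sqrt{v})$ distinct from $\q(\sqrt{D})$; and (iii) extract, from the hypothesis via the lemma equating ``no element of order $4$'' with ``every genus contains an ambiguous class'', an ambiguous class in the ``bad'' genus $\gen(\mathcal{A})$ -- this is the one and only place the hypothesis enters. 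A routine but essential supporting fact, used in both cases, is that every individual class of $\mathfrak{S}_D$ (not merely every genus) represents infinitely many primes.
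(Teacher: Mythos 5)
Your proposal is correct in substance, but it takes a more explicit (and heavier) route than the paper. The paper disposes of the corollary in two lines: it notes that ``no class of order $4$'' is equivalent to ``every genus of $\mathfrak{S}_D$ contains an ambiguous class,'' and then invokes Lemma \ref{easy} — the elementary construction of an arithmetic progression of primes $p$ with $\legendre{dL}{p}=-1$ and $\legendre{D}{p}=1$ — together with Theorem \ref{maint} and the determinant congruence already used in Corollary \ref{cor:quat}, leaving implicit (essentially via Meyer's theorem \cite{m}) the extraction of such primes that are actually represented by an ambiguous class. You instead isolate exactly that implicit step and resolve it by a genus-character dichotomy: if the character $\legendre{v}{\cdot}$, $v=\sqf(dL)$, is not forced to be $+1$ on primes represented by $\mathcal{E}$, you work inside the principal class (no hypothesis needed, as in Corollary \ref{cor:quat}); otherwise $\q(\sqrt{v})$ sits inside the ring class field, hence inside the genus field, so $\legendre{v}{\cdot}$ is a nontrivial genus character (nontrivial precisely because $v>0>D$), and the ``no order $4$'' hypothesis supplies an ambiguous class in the genus on which it is $-1$. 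What the paper's route buys is brevity and elementarity (quadratic reciprocity plus Lemma \ref{easy}); what yours buys is a transparent account of exactly where class-level (rather than genus-level) representation of primes is needed and where the order-$4$ hypothesis enters, at the cost of invoking class field theory, Chebotarev, and the Weber-type fact that every class represents infinitely many primes.

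Two small points to tighten if you write this up: in Case 1, ``$\chi$ not constant on primes in $Q(\mathcal{E})$'' should be upgraded to the Galois-theoretic statement $\q(\sqrt{v})\not\subseteq H_D$ (the ring class field) before applying Chebotarev, since non-constancy alone does not formally yield infinitude; and since $D$ need not be a fundamental discriminant, the identifications ``primes represented by $\mathcal{E}$ $=$ primes split completely in $H_D$'' and ``quadratic subfields of the genus field $\leftrightarrow$ genus characters'' must be taken in the setting of orders, which \cite{cox} does cover. Neither point is a genuine gap.
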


\begin{proof} Note that the group $\mathfrak S_D$ has no class of order $4$ if and only if any genus in $\mathfrak S_D$ contains an ambiguous class. Hence the corollary follows directly from  Lemma \ref{easy} and  Theorem \ref{maint}.  \end{proof}

\begin{rmk}
Despite the identification of numerous potential candidates for quaternary isolations of certain irrecoverable binary $\z$-lattices, no example has yet been proven to be an isolation. In Table \ref{tablecand4}, we provide all candidates for quaternary isolations of some binary $\z$-lattices. We have checked that each of the quaternary candidates represents all sublattices of $\ell$ with index $p$, where $p$ is a prime no greater than $149$. 

Note that the first three candidates for quateranry isolations of 
\[
    \ell=\begin{pmatrix} 6&3\\3&8 \end{pmatrix}
\] 
in Table \ref{tablecand4} have nonsquare discriminants. This is possible since $\ell$ does not satisfy the conditions in Corollary \ref{4-square}, as $\mathfrak{S}_D$ has a class of order $4$. 
\end{rmk}

\begin{table}[ht]
\caption{All candidates for quaternary isolations of some binary $\z$-lattices}\label{tablecand4}
\begin{tabular}{c|c|l}
\hline 
$\ell$ & $\mathfrak{S}_{D}$ & Candidates for quaternary isolations of $\ell$ with their discriminants\\
\hline \rule{0pt}{2.5em}
\multirow{8}{*}{$\begin{pmatrix}2&1\\1&2\end{pmatrix}$} & \multirow{8}{*}{$\{\mathcal{E}\}$}
&
\begin{scriptsize}
$\begin{pmatrix}1&0&0&0\\0&1&0&0\\0&0&4&2\\0&0&2&5\end{pmatrix}_{16}$  
$\begin{pmatrix}1&0&0&0\\0&2&0&1\\0&0&2&1\\0&1&1&5\end{pmatrix}_{16}$ 
$\begin{pmatrix}1&0&0&0\\0&2&1&-1\\0&1&5&1\\0&-1&1&5\end{pmatrix}_{36}$ 
$\begin{pmatrix}2&0&1&1\\0&3&0&0\\1&0&3&0\\1&0&0&3\end{pmatrix}_{36}$ \end{scriptsize}\\[2em]
&&
\begin{scriptsize}
$\begin{pmatrix}2&0&0&1\\0&2&0&1\\0&0&4&2\\1&1&2&6\end{pmatrix}_{64}$ 
$\begin{pmatrix}2&0&1&0\\0&3&1&1\\1&1&5&2\\0&1&2&5\end{pmatrix}_{100}$ 
$\begin{pmatrix}2&1&0&1\\1&3&1&1\\0&1&5&-2\\1&1&-2&6\end{pmatrix}_{100}$ 
$\begin{pmatrix}2&0&1&1\\0&4&2&-2\\1&2&6&1\\1&-2&1&6\end{pmatrix}_{144}$ 
\end{scriptsize}\\[2em]
&&
\begin{scriptsize}
$\begin{pmatrix}2&0&1&1\\0&6&1&-1\\1&1&6&3\\1&-1&3&6\end{pmatrix}_{256}$ 
$\begin{pmatrix}2&0&1&1\\0&6&3&-1\\1&3&6&2\\1&-1&2&10\end{pmatrix}_{400}$
\end{scriptsize}\\[1.8em]
\hline \rule{0pt}{2.5em}
$\langle 2,2\rangle$ & $\{\mathcal{E}\}$ &
\begin{scriptsize}
$\begin{pmatrix}1&0&0&0\\0&2&0&1\\0&0&4&0\\0&1&0&5\end{pmatrix}_{36}$ 
$\begin{pmatrix}1&0&0&0\\0&2&0&1\\0&0&4&2\\0&1&2&6\end{pmatrix}_{36}$ 
$\begin{pmatrix}2&1&1&1\\1&4&0&1\\1&0&4&0\\1&1&0&4\end{pmatrix}_{81}$ 
$\begin{pmatrix}2&0&0&1\\0&4&0&2\\0&0&4&0\\1&2&0&6\end{pmatrix}_{144}$
\end{scriptsize}\\[1.8em]
\hline \rule{0pt}{2.5em}
$\begin{pmatrix}2&1\\1&3\end{pmatrix}$ & $\z / 3\z$ &
\begin{scriptsize}
$\begin{pmatrix}1&0&0&0\\0&2&0&1\\0&0&6&1\\0&1&1&6\end{pmatrix}_{64}$
\end{scriptsize}\\[1.8em]
\hline \rule{0pt}{2.5em}
$\begin{pmatrix}6&3\\3&8\end{pmatrix}$ & $\z /4\z$ &
\begin{scriptsize}
$\begin{pmatrix}1&0&0&0\\0&2&1&1\\0&1&3&0\\0&1&0&11\end{pmatrix}_{52}$ 
$\begin{pmatrix}2&0&1&1\\0&3&1&1\\1&1&4&2\\1&1&2&4\end{pmatrix}_{52}$ 
$\begin{pmatrix}2&0&1&1\\0&4&2&-2\\1&2&4&0\\1&-2&0&12\end{pmatrix}_{208}$ 
$\begin{pmatrix}6&2&3&3\\2&6&0&1\\3&0&10&5\\3&1&5&12\end{pmatrix}_{2401}$
\end{scriptsize}\\[1.8em]
\hline
\end{tabular}
\end{table}


\end{document}